\numberwithin{equation}{section}
\setlist[itemize]{leftmargin=*,labelindent=\parindent}
\setlist[enumerate]{leftmargin=*,labelindent=\parindent}
\theoremstyle{plain}
\newtheorem{thm}{Theorem}[section]
\newtheorem*{thm*}{Theorem}
\newtheorem{lem}[thm]{Lemma}
\newtheorem{prop}[thm]{Proposition}
\theoremstyle{remark}
\newtheorem*{rem}{Remark}
\newcommand\GL{\mathrm{GL}}
\newcommand\GSp{\mathrm{GSp}}
\newcommand\gl{\mathfrak{gl}}
\newcommand\M{\mathrm{M}}
\newcommand\SL{\mathrm{SL}}
\newcommand\C{\mathbb{C}}
\newcommand\F{\mathbb{F}}
\newcommand\Q{\mathbb{Q}}
\newcommand\Z{\mathbb{Z}}
\newcommand\cL{\mathcal{L}}
\newcommand\cM{\mathcal{M}}
\newcommand\cN{\mathcal{N}}
\newcommand\cR{\mathcal{R}}
\title{Representations of $\GL_2$ over $\Z/p^n\Z$ and supercongruences for hypergeometric polynomials}
\author{Atsushi Ichino}
\address{Department of Mathematics, Kyoto University, Kitashirakawa Oiwake-cho, Sakyo-ku, Kyoto 606-8502, Japan}
\email{ichino@math.kyoto-u.ac.jp}
\author{Kartik Prasanna}
\address{Department of Mathematics, University of Michigan, 2074 East Hall, 530 Church Street, Ann Arbor, MI 48109-1043, USA}
\email{kartikp@umich.edu}
\begin{document}

\begin{abstract}
For an odd prime $p$, we realize the trivial representation of $\GL_2(\Z/p^n\Z)$ on the free $\Z/p^n \Z$-module of rank one as a subquotient of a direct sum of symmetric power representations (twisted by appropriate powers of the determinant) of rank strictly greater than one.
The proof eventually reduces to establishing some novel supercongruences for hypergeometric polynomials.
\end{abstract}

\maketitle

\section{Introduction}

\subsection{Motivation and main result}

One of the most basic examples of a representation of a (non-abelian) group is a finite-dimensional (rational) representation of $\GL_2(\C)$ over $\C$.
Such a representation is always semisimple, and an irreducible representation (up to twist by a character) is classified by its dimension.
Namely, for any non-negative integer $k$, there exists a unique (up to twist by a character) irreducible representation of $\GL_2(\C)$ over $\C$ of dimension $k+1$.
Moreover, such an irreducible representation can be realized on the space $\C[x,y]_k$ of homogeneous polynomials of degree $k$.

For arithmetic applications, it is natural to consider finite-dimensional representations of $\GL_2(F)$ over an arbitrary field $F$.
When $F$ has characteristic zero, $\GL_2(F)$ has the same representation theory as $\GL_2(\C)$.
In particular, the representation $F[x,y]_k$ is always irreducible.
In contrast, when $F$ has positive characteristic, the representation theory of $\GL_2(F)$ is richer and more intricate.
For example, there are non-semisimple finite-dimensional representations of $\GL_2(F)$ over $F$, and the representation $F[x,y]_k$ itself is typically not irreducible or even semisimple.
For a finite field $F = \F_{p^n}$, where $p$ is a prime and $n \ge 1$ is an integer, these representations have been widely studied (see e.g.~\cite{ghate-jana}, \cite{ghate-vangala}, \cite{glover}, \cite{reduzzi}) and have played a significant role in arithmetic applications, such as the mod-$p$ and $p$-adic local Langlands correspondences for $\GL_2(\Q_p)$.

In this paper, we consider a more challenging setting: instead of a finite field $F = \F_{p^n}$, we take a finite ring $R=\Z/p^n \Z$, where $p$ is an odd prime, and study representations of $\GL_2(R)$ over $R$.
When $n>1$, the situation is further obscured and almost nothing seems to be known due to the following inherent difficulties:
\begin{itemize}
\item A finitely generated $R$-module is not necessarily free.
\item The map $R \rightarrow R, \, x \mapsto x^p$ is not a ring homomorphism.
\end{itemize}
We encountered this setting in our study of $p$-adic (resp.~mod-$p^n$) modular forms on $\GSp_4$, which take values in representations of $\GL_2$ over $\Z_p$ (resp.~$\Z/p^n \Z$). 
Specifically, we were interested in explicitly constructing an overconvergent solution to a certain $p$-adic differential equation on a $\GSp_4$-Shimura variety (arising from the Gauss-Manin connection) using successive mod-$p^n$ approximations.
This leads naturally to the study of differential (theta) operators between sheaves on this Shimura variety associated with the symmetric power 
representations $(\Z/p^n \Z )[x,y]_k$.  
Moreover, it becomes important to understand whether the sheaf associated with a given symmetric power representation (such as say the \emph{trivial} representation) can be realized as a subquotient of the sheaf associated with a different symmetric power representation. 
This question can be formulated in purely representation-theoretic terms: 
can the trivial representation of $\GL_2(\Z/p^n \Z)$ on the free $\Z/p^n \Z$-module of rank one be realized as a subquotient of the symmetric power representation $(\Z/p^n \Z )[x,y]_k$ (up to twist) for some $k \ge 1$?

For $n=1$, we restrict our attention to the case $k=2(p-1)$, as this is the one relevant to our main result.
In this case, the trivial representation of $\GL_2(\Z/p\Z)$ appears in
\[
 (\Z/p\Z)[x,y]_{2(p-1)}
\]
as a quotient, which is realized by the equivariant projection onto the line spanned by $x^{p-1} y^{p-1}$.
However, for $n>1$, it seems unlikely that any single representation $(\Z/p^n \Z)[x,y]_k$ contains the trivial representation as a subquotient (up to twist), which makes it difficult to find the correct formulation.
Nevertheless, we are able to precisely formulate and prove an analogous result for all $n > 1$.
To illustrate the nature of the main result, it suffices to focus on the case $n=2$.

\begin{thm*}
Assume that $p \ne 2$.
For $n=2$ (so that $R = \Z/p^2 \Z$), the trivial representation of $\GL_2(R)$ on the free $R$-module of rank one appears in
\[
 R[x,y]_{2(p-1)} \oplus R[x,y]_{2(p^2-p)} \oplus R[x,y]_{2(p^2-1)}
\]
as a subquotient, where each direct summand is twisted by an appropriate power of the determinant.
\end{thm*}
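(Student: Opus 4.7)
My plan is to realize the trivial subquotient as the image of an explicit equivariant linear functional on a carefully chosen submodule, and to reduce its equivariance modulo $p^2$ to a finite system of supercongruences for binomial coefficients.

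First I would normalize the central characters: twisting $R[x,y]_k$ by $\det^{-k/2}$ (legitimate since each of the three weights is even) ensures that every summand has trivial central character. On the resulting direct sum $W$, the ``middle'' monomial $e_k := x^{k/2}y^{k/2}$ is then fixed by the diagonal torus and, because $p$ is odd and each of the three values of $k/2$ is even, also by the Weyl element $w=\bigl(\begin{smallmatrix}0&1\\1&0\end{smallmatrix}\bigr)$. Consider the candidate functional
\[
\phi(f_1,f_2,f_3) = c_1\,[x^{p-1}y^{p-1}]\,f_1 + c_2\,[x^{p^2-p}y^{p^2-p}]\,f_2 + c_3\,[x^{p^2-1}y^{p^2-1}]\,f_3,
\]
where $[\cdot]$ denotes coefficient extraction and $c_1,c_2,c_3\in R$ are scalars to be determined. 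One checks immediately that $\phi$ is invariant under the diagonal torus and the Weyl element. Restricted to the $\GL_2(R)$-submodule $M$ generated by a suitable vector $v=(v_1,v_2,v_3)$ (each $v_i$ close to a scalar multiple of $e_{k_i}$), the aim is to make $\phi|_M$ fully $\GL_2(R)$-equivariant with $\phi(v)\in R^\times$, giving a short exact sequence $0\to\ker\phi|_M\to M\to R\to 0$ that realizes the trivial representation as the desired subquotient.

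Since $\GL_2(R)$ is generated by the diagonal torus, the Weyl element, and the upper and lower unipotents $u(b)=\bigl(\begin{smallmatrix}1&b\\0&1\end{smallmatrix}\bigr)$, $\ell(c)=\bigl(\begin{smallmatrix}1&0\\c&1\end{smallmatrix}\bigr)$, the substantive condition is equivariance under group words in the $u$'s and $\ell$'s. A direct calculation in a single summand shows that $\phi$ fails to be equivariant modulo $p^2$: for example, extracting the coefficient of $x^{p-1}y^{p-1}$ in $u(b)\ell(c)\cdot x^{p-1}y^{p-1}$ produces the polynomial
\[
\sum_{j\ge 0} \binom{p-1}{j}\binom{p-1+j}{j}(bc)^j,
\]
which reduces to $1$ modulo $p$ but carries nontrivial $p$-divisible corrections for $j\ge 1$. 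Expanding the analogous quantities in the other two summands and matching coefficients of each monomial in $b,c,\ldots$ translates the equivariance condition into a finite system of identities of the form
\[
c_1\,X_1(J) + c_2\,X_2(J) + c_3\,X_3(J) \equiv 0 \pmod{p^2},
\]
indexed by tuples $J$, where each $X_i(J)$ is a product of binomial coefficients of the shape $\binom{p-1}{\cdot}$, $\binom{p^2-p}{\cdot}$, or $\binom{p^2-1}{\cdot}$. Determining the scalars $c_i$ (and refining the choice of $v_i$ if necessary) amounts to solving this system over $R$ while keeping $\phi(v)\in R^\times$.

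The main obstacle is the combinatorial one: proving the requisite supercongruences modulo $p^2$. Each binomial coefficient appearing above admits a sharpening of its classical mod $p$ value---for instance $\binom{p-1}{j}\equiv(-1)^j(1-pH_j)\pmod{p^2}$ with $H_j$ the $j$-th harmonic number, and analogous expansions for the $\binom{p^2-p}{\cdot}$ and $\binom{p^2-1}{\cdot}$ families---so after the leading mod $p$ terms cancel one is left with identities among harmonic-type sums modulo $p$. I expect this step, which lifts the classical mod $p$ identities that sufficed for $n=1$ to valid statements modulo $p^2$, to be by far the hardest part of the proof, and to account for the ``novel supercongruences'' flagged in the abstract.
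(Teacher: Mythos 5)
Your high-level scaffolding is sound: look for a cyclic $\GL_2(R)$-submodule $M$ generated by a vector $v$ whose components are (up to units) the middle monomials, together with a functional $\phi$ on $M$ satisfying $\phi(gv)=\phi(v)$ for all $g$, which then yields the trivial representation as a quotient of the submodule $M$. You have also correctly identified the crux: the equivariance condition expands into a family of supercongruences modulo $p^2$. This matches the strategic outline of the paper, and your guess that $v$ should be built from the middle monomials is essentially what the authors do.

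However, there is a genuine gap between the scaffolding and a proof, and it sits exactly where you wave your hands. You treat the scalars $c_i$ and the perturbations of $v_i$ as free parameters to be ``solved for'' against a large system of constraints, but you give no reason why that system should be solvable, and a pure coefficient-extraction functional is not obviously the right one. The paper resolves this by a device you have not invoked: instead of working with the abstract direct sum $R[x,y]_{2(p-1)}\oplus R[x,y]_{2(p^2-p)}\oplus R[x,y]_{2(p^2-1)}$, the authors realize every (twisted, dualized) summand inside the common ambient ring $\Z[1/2][z_{11},z_{12},z_{21},z_{22}]$ via a Fock-model/Clebsch--Gordan construction, with explicit generators $\phi_{k,l}\cdot\nu^m$ where $\phi_{k,l}=\frac{1}{l!}D^l(z_{11}^2+z_{21}^2)^k$ and $\nu=\det$. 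In that realization the would-be invariant is the single polynomial $\nu^{p^n}$, which is manifestly $\GL_2$-invariant up to twist, and the whole family of coefficient congruences collapses to one polynomial congruence $\sum_{r=0}^{2^n-1}(-1)^r\psi_r\in p^n\cR$ with $\psi_r=(-1)^{k_r/2}2^{-k_r}\phi_{k_r,k_r}\nu^{p^n-k_r}$. This also forces the scalars: they are $(-1)^r$, not free parameters. Without this realization your ``tuning'' step has no traction, and it is not at all clear your proposed $\phi$ kills the relevant correction terms.

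Two further concrete points. First, the congruences that actually emerge are not sums of products of integer binomials $\binom{p-1}{\cdot}$, $\binom{p^2-p}{\cdot}$, $\binom{p^2-1}{\cdot}$ as you anticipate; after recognizing the inner sums as the hypergeometric polynomials ${}_2F_1(-m,-m+1/2;1;t)$ and changing variables $t\mapsto 1-t$, the statement reduces (Theorem \ref{t:key}) to
\[
\sum_{r=0}^{2^n-1}(-1)^r\binom{k_r/2}{i}\binom{-1/2-i}{k_r/2}\equiv 0 \pmod{p^n},
\]
which involves a rational binomial $\binom{-1/2-i}{\cdot}$ and has a quite different combinatorial shape from what you wrote. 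Second, your plan to prove the $n=2$ congruences by sharpening Lucas' theorem with harmonic-number corrections is plausible for $n=2$ in isolation (and the authors acknowledge a case-by-case mod $p^2$ argument is not impossible), but it does not scale: the paper's own proof goes through a further generalization (Theorem \ref{t:gen1}) valid for all $n$ and relies on Granville's formula for binomial coefficients modulo prime powers together with a delicate double induction (Theorem \ref{t:gen2}). If your aim is only the displayed $n=2$ statement, your combinatorial plan might be carried out, but you would still need to first nail down the correct functional and vector, which is where the real structural idea lives.
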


\begin{rem}
We emphasize that the above theorem does \emph{not} imply that the trivial representation appears as a subquotient in one of the three representations.
Although the Jordan-H\"older theorem shows that at least one of them contains a non-zero $R$-module as a subquotient on which $\GL_2(R)$ acts trivially, it does not guarantee that this module is free since $R$ is not a field.
In fact, it seems unlikely that the statement would still hold if any single summand were removed from the direct sum.
\end{rem}

In general (so that $R = \Z/p^n\Z$), to realize the trivial representation of $\GL_2(R)$, we need $2^n-1$ representations $R[x,y]_{2k_r}$ for $1 \le r \le 2^n-1$, where 
\[
 k_0 < k_1 < \cdots < k_{2^n-1}
\]
denote the first $2^n$ non-negative integers whose digits are either $0$ or $p-1$ in base $p$ (see Theorem \ref{t:main}).
The proof of this theorem eventually reduces to establishing supercongruences for certain hypergeometric polynomials (see Theorem \ref{t:hyp}).
Surprisingly, even the congruence for the seemingly simplest coefficients of these polynomials, which corresponds to the case $i=0$ in Theorem \ref{t:key}, turns out to be new (to the best of our knowledge) and rather difficult to prove: it states that 
\[
 \sum_{r=0}^{2^n-1} (-1)^r \binom{-1/2}{k_r/2} \equiv 0 \mod p^n.
\]

There is a large body of work on $p$-adic differential equations and related congruences involving hypergeometric functions, starting with the pioneering work of Dwork \cite{dwork}.
However, we have not yet found a connection between our result and previous work on this topic.
Discovering such a link would be very interesting.
It would also be interesting to generalize our result to other representations of $\GL_2$ or to other algebraic groups. 

In the following subsection, we describe how the above theorem is related to congruences for hypergeometric polynomials, and outline the main idea of the proof.

\subsection{Sketch of the proof}

To compare the three representations in the theorem stated in the previous subsection, it is better to realize them within a single representation, which is carried out in Section \ref{s:main}.
Inspired by the Fock model of the Weil representation, we are led to use an integral version of the Clebsch-Gordan decomposition
\begin{align*}
 \C[\M_2] & = \C[z_{11},z_{12}] \otimes_\C \C[z_{21},z_{22}] \\
 & = \bigoplus_{k_1=0}^\infty \bigoplus_{k_2=0}^\infty 
 \C[z_{11},z_{12}]_{k_1} \otimes_\C \C[z_{21},z_{22}]_{k_2} \\
 & \cong \bigoplus_{k_1=0}^\infty \bigoplus_{k_2=0}^\infty \bigoplus_{i=0}^{\min \{ k_1, k_2 \}} \C[x,y]_{k_1+k_2-2i}.
\end{align*}
Here we denote by $\C[\M_2]$ the space of polynomial functions of $z = \begin{psmallmatrix} z_{11} & z_{12} \\ z_{21} & z_{22} \end{psmallmatrix}$ and omit the necessary twists for each direct summand on the right-hand side.
For $R = \Z/p^n\Z$ and $k \ge 0$, we realize (the dual of) $R[x,y]_{2k}$ as $\cL_{2k}/p^n \cL_{2k}$ (up to twist), where $\cL_{2k} \subset \Z[\M_2]$ is a certain free submodule of rank $2k+1$ such that $\cL_{2k} \otimes_{\Z} \C \cong \C[x,y]_{2k}$ (up to twist).
For example, we take as $\cL_0$ the submodule generated by $\psi_0(z) = (z_{11} z_{22} - z_{12} z_{21})^{p^n}$.
Put $\cM_r = \cL_{2k_r} \otimes_\Z \Z[1/2]$ for $0 \le r \le 2^n-1$ and $\cM = \bigoplus_{r=1}^{2^n-1} \cM_r$.
Then we have natural equivariant injective maps
\[
 \cM_0/ (\cM_0 \cap p^n \cR) \rightarrow (\cM_0 \oplus \cM)/ ((\cM_0 \oplus \cM) \cap p^n \cR) \leftarrow \cM/ (\cM \cap p^n \cR),
\]
where $\cR = \Z[1/2][\M_2]$.
Since $\cM_0$ is generated by $\psi_0$, we have $\cM_0 \cap p^n \cR = p^n \cM_0$, so that the image $\cN_0$ of the first map is a free $R$-module of rank one.
If we can find a polynomial $\psi \in \cM$ such that the congruence 
\begin{equation}
\label{eq:psi}
 \psi_0 - \psi \in p^n \cR 
\end{equation}
holds, then the preimage $\cN$ of $\cN_0$ under the second map is also a free $R$-module of rank one, which implies that the preimage of $\cN$ in $\cM/p^n \cM$ gives rise to the desired subquotient.

The construction of $\psi$ is also given in Section \ref{s:main}, but it is extremely difficult to prove the above congruence.
In fact, by explicitly computing the coefficients of $\psi_0 - \psi$, we see that \eqref{eq:psi} is equivalent to 
\[
 \sum_{r=0}^{2^n-1} \sum_l (-1)^{r+k_r/2+l}
  2^{-k_r} \binom{k_r}{k_r/2+j}
  \binom{k_r-2j}{k_r-i+l} \binom{k_r+2j}{i-l} \binom{p^n-k_r}{l}
  \equiv 0 \mod p^n
\]
for all $i,j \ge 0$, where $l$ runs over all integers.
Although it is not impossible to prove this directly using a case-by-case argument for $n=2$, its extension to general $n$ seems hopeless.

Thus, a further reduction is necessary, which is carried out in Section \ref{s:some}.
By computing $\psi_0 - \psi$ in a different way and making a suitable change of variables, we reduce \eqref{eq:psi} to
\[
 \sum_{r=0}^{2^n-1} (-1)^r t^{(p^n-1-k_r)/2} {}_2F_1(-k_r/2,-(k_r-1)/2;1;1-t) \in p^n \Z[1/2][t]
\]
(see Theorem \ref{t:hyp}).
Here ${}_2F_1(a,b;c;t)$ denotes the hypergeometric function and each summand on the left-hand side is in fact a polynomial in $\Z[1/2][t]$.
Moreover, by computing the coefficients of the left-hand side, we further reduce this to
\[
 \sum_{r=0}^{2^n-1} (-1)^r \binom{k_r/2}{i} \binom{-1/2-i}{k_r /2}
 \equiv 0 \mod p^n
\]
for all $i \ge 0$ (see Theorem \ref{t:key}).
For $n=2$, its direct proof is much easier than that of the original congruence.
However, its extension to general $n$ still seems infeasible.

In Section \ref{s:more}, we prove Theorem \ref{t:key} as a special case of a further generalization (see Theorem \ref{t:gen1}), which takes the form
\[
 \sum_{r=0}^{2^n-1} (-1)^r \binom{m(r)}{i} \binom{m(2^a-1)-i}{m(r)}
 \equiv 0 \mod p^n
\]
for all $n \ge 1$, $a \ge n$, and $i \ge 0$, where $m$ is a certain monotonically increasing function.
The proof of Theorem \ref{t:gen1} is given by induction for each fixed $n,a,i$.
To make the inductive argument work, we introduce new variables $0 \le k \le n$ and $0 \le j \le n-k$, and formulate an intermediate statement in Theorem \ref{t:gen2}, which is too technical to state here.
Consequently, we proceed by double induction on $k$ and $j$ to prove Theorem \ref{t:gen1}, where the key ingredient is a result of Granville \cite{granville} on congruences for binomial coefficients modulo $p^n$.
We remark that $p=2$ is allowed at this stage, in which case Theorem \ref{t:gen1} implies that
\[
 \sum_{r=0}^{2^n-1} (-1)^r \binom{r}{i} \binom{2^a-1-i}{r} \equiv 0 \mod 2^n
\]
for all $n \ge 1$, $a \ge n$, and $i \ge 0$.
Surprisingly, we were not able to find a proof of this congruence in the literature.

\subsection*{Notation}

For any integers $n \ge 0$ and $k$, the binomial coefficient $\binom{n}{k}$ is given by
\[
\binom{n}{k} =
\begin{cases}
 \dfrac{n!}{k! (n-k)!} & \text{if $0 \le k \le n$;} \\
 0 & \text{if $k<0$ or $k>n$.}
\end{cases}
\]
If $k \ge 0$, then we may define
\[
 \binom{x}{k} = \frac{x (x-1) \cdots (x-k+1)}{k!}
\]
for an arbitrary number $x$.

Let $p$ be a prime.
We denote by $v_p$ the $p$-adic valuation.
For $x \in \Z_{(p)}$ and $n \ge 1$, we write $x \equiv 0 \mod p^n$ if $x \in p^n \Z_{(p)}$.
We define the $p$-deprived factorial by
\[
 n!_p = \prod_{\substack{1 \le i \le n \\ p \nmid i}} i
\]
for $n \ge 0$.

\section{Representations of $\GL_2$ over $\Z/p^n\Z$}
\label{s:main}

In this section, we state our main result and reduce it to congruences for certain polynomials.

\subsection{Symmetric and divided powers}

Let $\Delta$ be the monoid consisting of matrices $\alpha \in \M_2(\Z)$ such that $\det \alpha \ne 0$.
Define an action of $\Delta$ on $\Z[z_1,z_2]$ by 
\[
 (\alpha \phi)(z) = \phi(z\alpha)
\]
(regarded as functions of $z = (z_1,z_2)$) for $\alpha \in \Delta$ and $\phi \in \Z[z_1,z_2]$.
For example, the induced actions of $\begin{psmallmatrix} 0 & 0 \\ 1 & 0 \end{psmallmatrix}, \begin{psmallmatrix} 0 & 1 \\ 0 & 0 \end{psmallmatrix} \in \gl_2(\Z)$ are given by 
\[
 z_2 \frac{\partial}{\partial z_1}, \quad
 z_1 \frac{\partial}{\partial z_2},
\]
respectively.
For any integer $k \ge 0$, we denote by $L_k' \subset \Z[z_1,z_2]$ the module of homogeneous polynomials of degree $k$.
Let $L_k \subset L_k'$ be the submodule generated by $\{ \binom{k}{i} z_1^{k-i} z_2^i \, | \, 0 \le i \le k \}$.
Then $L_k$ and $L_k'$ are free $\Z$-modules of rank $k+1$ which are stable under the action of $\Delta$.
Moreover, if we define a non-degenerate bilinear pairing $(\cdot, \cdot ):L_k \times L_k' \rightarrow \Z$ by 
\[
 (\tbinom{k}{i} z_1^{k-i} z_2^i, z_1^{k-j} z_2^j) = 
\begin{cases}
 (-1)^i & \text{if $i+j=k$;} \\
 0 & \text{otherwise}
\end{cases}  
\]
for $0 \le i, j \le k$, then we have
\[
 (\alpha \phi, \alpha \phi') = (\det \alpha)^k (\phi, \phi')
\]
for $\alpha \in \Delta$, $\phi \in L_k$, and $\phi' \in L_k'$.

Fix a prime $p$ and an integer $n \ge 1$.
Put 
\[
 V_k = L_k/p^n L_k, \quad
 V'_k = L'_k/p^n L'_k,
\]
so that $V_k$ and $V_k'$ are free $\Z/p^n\Z$-modules of rank $k+1$, and can be regarded as representations of $\GL_2(\Z/p^n\Z)$.
For any integer $m$, we write $V_{k,m}$ for the representation of $\GL_2(\Z/p^n\Z)$ on $V_k$ twisted by $\det^m$.
Then the pairing $(\cdot,\cdot)$ induces an isomorphism
\[
 \operatorname{Hom}_{\Z/p^n\Z}(V_k',\Z/p^n\Z) \cong V_{k,-k}
\]
of representations of $\GL_2(\Z/p^n\Z)$.

Now we state our main result.
For any integer $r \ge 0$, we write $r = \sum_{i=0}^\infty r_i 2^i$ with $r_i \in \{ 0, 1 \}$ and put 
\begin{equation}
\label{eq:kr}
 k_r = (p-1) \sum_{i=0}^\infty r_i p^i. 
\end{equation}
Note that the non-negative integers $k_r$ are exactly those whose digits are either $0$ or $p-1$ in base $p$.

\begin{thm}
\label{t:main}
Assume that $p \ne 2$.
Then $V_{0,p^n}$ appears in 
\[
 \bigoplus_{r=1}^{2^n-1} V_{2k_r,p^n-k_r}
\]
as a subquotient.
\end{thm}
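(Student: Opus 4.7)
The plan is to realize every representation appearing in the theorem inside a single ambient space, so that the desired subquotient can be exhibited by one explicit equivariant relation. Following the framework of Section \ref{s:main}, I would embed the relevant twists of the $L'_{2k_r}$ into the polynomial ring $\cR = \Z[1/2][\M_2]$ via an integral form of the Clebsch--Gordan decomposition, obtaining $\Delta$-stable lattices $\cL_{2k_r}$ and their $\Z[1/2]$-extensions $\cM_r = \cL_{2k_r} \otimes_\Z \Z[1/2]$. Inverting $2$ is harmless since $p$ is odd. Set $\cM = \bigoplus_{r=1}^{2^n-1} \cM_r$.

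The strategy is then to exploit the $\GL_2(R)$-equivariant diagram
\[
 \cM_0/(\cM_0 \cap p^n \cR) \;\hookrightarrow\; (\cM_0 \oplus \cM)/((\cM_0 \oplus \cM) \cap p^n \cR) \;\hookleftarrow\; \cM/(\cM \cap p^n \cR).
\]
First I would verify the identity $\cM_0 \cap p^n \cR = p^n \cM_0$, so that the left-hand side is a free $R$-module of rank one; its generator is the image of a distinguished $\GL_2$-invariant polynomial $\psi_0 \in \cM_0$ coming from the trivial component of Clebsch--Gordan, and this is the putative copy of $V_{0,p^n}$. If I can construct $\psi \in \cM$ satisfying $\psi_0 - \psi \in p^n \cR$, then the preimage $\cN$ under the second map of the image $\cN_0$ of the first map is free of rank one over $R$, and its preimage in $\cM/p^n\cM$ gives the required subquotient of $\bigoplus_{r=1}^{2^n-1} V_{2k_r, p^n - k_r}$.

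Guided by the Fock model of the Weil representation, I would take $\psi$ to be an explicit linear combination of hypergeometric polynomials in the matrix coordinates $z_{ij}$. Expanding $\psi_0 - \psi \pmod{p^n \cR}$ monomial by monomial then turns the congruence into the family of supercongruences
\[
 \sum_{r=0}^{2^n-1} \sum_l (-1)^{r+k_r/2+l} 2^{-k_r} \binom{k_r}{k_r/2+j} \binom{k_r-2j}{k_r-i+l} \binom{k_r+2j}{i-l} \binom{p^n-k_r}{l} \equiv 0 \pmod{p^n}
\]
for all $i,j \ge 0$. Computing $\psi_0$ and $\psi$ in a different way and performing a symmetrizing change of variables should then reduce this to the much cleaner statement
\[
 \sum_{r=0}^{2^n-1} (-1)^r \binom{k_r/2}{i} \binom{-1/2-i}{k_r/2} \equiv 0 \pmod{p^n} \qquad (i \ge 0),
\]
which is the version I would actually attempt to prove.

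The main obstacle is this last family of supercongruences. Even the $i=0$ case $\sum_r (-1)^r \binom{-1/2}{k_r/2} \equiv 0 \pmod{p^n}$ does not appear to follow from standard $p$-adic hypergeometric identities. My approach would be to embed the statement in a strictly larger family obtained by replacing $k_r/2$ with a general monotone function $m(r)$ and introducing auxiliary parameters, and then to attack that family by double induction on the new parameters, with Granville's congruences for binomial coefficients modulo $p^n$ serving as the arithmetic engine. Finding the right intermediate technical statement that admits a clean induction, and arranging the induction so that each step exposes the desired cancellation mod $p^n$, is where I expect most of the genuine difficulty to lie.
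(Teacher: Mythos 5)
Your proposal reproduces the paper's own argument step for step: the embedding into $\cR$ via the integral Clebsch--Gordan lattices, the three-term diagram and the identity $\cM_0 \cap p^n\cR = p^n\cM_0$, the reduction of $\psi_0 - \psi \in p^n\cR$ first to the double-sum binomial congruence and then to $\sum_r (-1)^r\binom{k_r/2}{i}\binom{-1/2-i}{k_r/2}\equiv 0 \pmod{p^n}$, and the generalization via a monotone $m(r)$ with a double induction driven by Granville's congruences. This is essentially the same proof, outlined rather than carried out.
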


\subsection{Another realization}

To prove Theorem \ref{t:main}, we realize and compare the representations $V_{2k,m}$ in a single representation.
Define an action of $\Delta$ on $\Z[z_{11}, z_{12}, z_{21}, z_{22}]$ by 
\[
 (\alpha \phi)(z) = \phi(z\alpha)
\]
(regarded as functions of $z = \begin{psmallmatrix} z_{11} & z_{12} \\ z_{21} & z_{22} \end{psmallmatrix}$) for $\alpha \in \Delta$ and $\phi \in \Z[z_{11}, z_{12}, z_{21}, z_{22}]$.
For example, the induced actions of $\begin{psmallmatrix} 0 & 0 \\ 1 & 0 \end{psmallmatrix}, \begin{psmallmatrix} 0 & 1 \\ 0 & 0 \end{psmallmatrix} \in \gl_2(\Z)$ are given by 
\[
 D = z_{12} \frac{\partial}{\partial z_{11}} + z_{22} \frac{\partial}{\partial z_{21}}, \quad
 E = z_{11} \frac{\partial}{\partial z_{12}} + z_{21} \frac{\partial}{\partial z_{22}},
\]
respectively.
Define $\phi, \nu \in \Z[z_{11}, z_{12}, z_{21}, z_{22}]$ by
\[
 \phi(z) = z_{11}^2 + z_{21}^2, \quad
 \nu(z) = z_{11} z_{22} - z_{12} z_{21}.
\]
Note that $E \phi = D \nu = E \nu = 0$.
For any integers $k,l \ge 0$, put
\[
 \phi_{k,l} = \frac{1}{l!} D^l \phi^k.
\]
If $l \ge 2k+1$, then we have $\phi_{k,l} = 0$.

\begin{lem}
\label{l:phi_kl}
We have $\phi_{k,l} \in \Z[z_{11}, z_{12}, z_{21}, z_{22}]$.
\end{lem}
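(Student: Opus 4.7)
The plan is to exploit the fact that $D$ splits as a sum of two commuting derivations, $D_1 = z_{12} \partial/\partial z_{11}$ and $D_2 = z_{22} \partial/\partial z_{21}$, each of which acts on disjoint pairs of variables. Since $[D_1,D_2]=0$, the ordinary binomial theorem applies to $D^l = (D_1+D_2)^l$, and dividing by $l!$ cancels the multinomial coefficient $\binom{l}{a}=l!/(a!b!)$, giving
\[
 \frac{1}{l!} D^l = \sum_{a+b=l} \frac{1}{a!\,b!} D_1^a D_2^b.
\]
This is the key reformulation: the problematic denominator $l!$ is redistributed as the two denominators $a!$ and $b!$, each of which is attached to a derivation acting on a single variable.

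Next I would expand $\phi^k$ by the binomial theorem as $\phi^k = \sum_{j=0}^k \binom{k}{j} z_{11}^{2(k-j)} z_{21}^{2j}$, and apply each operator $\frac{1}{a!b!} D_1^a D_2^b$ termwise. Because $D_1^a$ affects only the $z_{11}$-factor and $D_2^b$ only the $z_{21}$-factor, a direct computation of the falling factorials shows
\[
 \frac{1}{a!} D_1^a \, z_{11}^{2(k-j)} = \binom{2(k-j)}{a} z_{11}^{2(k-j)-a} z_{12}^a,
\qquad
 \frac{1}{b!} D_2^b \, z_{21}^{2j} = \binom{2j}{b} z_{21}^{2j-b} z_{22}^b,
\]
so the two factorial denominators vanish into ordinary integer binomial coefficients.

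Assembling everything, the coefficient of the monomial $z_{11}^{2(k-j)-a} z_{12}^a z_{21}^{2j-b} z_{22}^b$ in $\phi_{k,l}$ is
\[
 \binom{k}{j}\binom{2(k-j)}{a}\binom{2j}{b},
\]
a product of three integer binomial coefficients, hence an integer. This proves $\phi_{k,l} \in \Z[z_{11},z_{12},z_{21},z_{22}]$. There is no real obstacle: the single point that needs care is the bookkeeping of the $1/l!$ — once it is split via the commutativity of $D_1$ and $D_2$ and the multinomial identity, the rest is a routine calculation with falling factorials on pure monomials.
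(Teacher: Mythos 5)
Your proposal is correct and is essentially the same argument as in the paper: both expand $D^l = (D_1 + D_2)^l$ by the binomial theorem for the commuting derivations $D_1 = z_{12}\partial_{z_{11}}$ and $D_2 = z_{22}\partial_{z_{21}}$, expand $\phi^k$ by the binomial theorem, and observe that applying $\frac{1}{a!}D_1^a$ and $\frac{1}{b!}D_2^b$ to pure monomials produces integer binomial coefficients. Your indices $(a,b)=(l-i,i)$ match the paper's, and the resulting coefficient $\binom{k}{j}\binom{2(k-j)}{a}\binom{2j}{b}$ is exactly the paper's $\binom{2k-2j}{l-i}\binom{2j}{i}\binom{k}{j}$.
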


\begin{proof}
We have
\begin{align*}
 \phi_{k,l}(z) & = \frac{1}{l!} \sum_{i=0}^l \sum_{j=0}^k \binom{l}{i} \binom{k}{j} z_{12}^{l-i} z_{22}^i \frac{\partial^l}{\partial z_{11}^{l-i} \partial z_{21}^i} z_{11}^{2k-2j} z_{21}^{2j} \\
 & = \sum_{i=0}^l \sum_{j=0}^k \binom{2k-2j}{l-i} \binom{2j}{i} \binom{k}{j} z_{11}^{2k-2j-l+i} z_{12}^{l-i} z_{21}^{2j-i} z_{22}^i.
\end{align*}
This yields the assertion.
\end{proof}

For any integers $k,m \ge 0$, let $\cL_{2k,m} \subset \Z[z_{11}, z_{12}, z_{21}, z_{22}]$ be the submodule generated by $\{ \phi_{k,l} \cdot \nu^m \, | \, 0 \le l \le 2k \}$.
Then $\cL_{2k,m}$ is a free $\Z$-module of rank $2k+1$.

\begin{lem}
\label{l:another}
The module $\cL_{2k,m}$ is stable under the action of $\Delta$.
Moreover, we have 
\[
 \cL_{2k,m}/p^n \cL_{2k,m} \cong V_{2k,m}
\]
as representations of $\GL_2(\Z/p^n\Z)$.
\end{lem}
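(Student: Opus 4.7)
The plan is to construct an explicit $\Delta$-equivariant $\Z$-linear isomorphism $\Phi\colon L_{2k}\xrightarrow{\sim}\cL_{2k,0}$ that carries the $\Z$-basis $\{\binom{2k}{l}z_1^{2k-l}z_2^{l}\}_{0\le l\le 2k}$ of the source bijectively onto the $\Z$-basis $\{\phi_{k,l}\}_{0\le l\le 2k}$ of the target, and then to obtain the general statement by multiplying by $\nu^m$. The motivating observation is that $z_1^{2k}\in L_{2k}\otimes\Q$ and $\phi^{k}\in\cL_{2k,0}\otimes\Q$ should play the same role: each is a highest-weight vector generating an irreducible $\GL_2(\Q)$-representation of highest weight $(2k,0)$, and in both cases the remaining basis vectors are obtained by applying the same divided-power operator $\frac{1}{l!}D^{l}$.

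I would first carry out the argument over $\Q$. The identity $E\phi=0$ yields $E\phi^{k}=0$, and $\phi^{k}$ is bihomogeneous of bidegree $(2k,0)$ in the first- and second-column variables, so it is a highest-weight vector of weight $(2k,0)$. Consequently, the $\GL_2(\Q)$-submodule it generates in $\Q[z_{11},z_{12},z_{21},z_{22}]$ is irreducible of dimension $2k+1$ and, by construction, coincides with $\cL_{2k,0}\otimes\Q$. Together with the analogous description of $L_{2k}\otimes\Q$, Schur's lemma produces a unique $\GL_2(\Q)$-equivariant isomorphism $\Phi_\Q$ with $z_1^{2k}\mapsto\phi^{k}$. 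Transporting $\frac{1}{l!}D^{l}$ through $\Phi_\Q$ and applying it to the respective highest-weight vectors gives
\[
 \Phi_\Q\bigl(\tbinom{2k}{l}z_1^{2k-l}z_2^{l}\bigr)=\phi_{k,l}\qquad(0\le l\le 2k),
\]
so $\Phi_\Q$ sends the $\Z$-basis of $L_{2k}$ bijectively onto the $\Z$-basis of $\cL_{2k,0}$---the target basis lies in $\Z[z_{11},z_{12},z_{21},z_{22}]$ by Lemma \ref{l:phi_kl} and is $\Z$-linearly independent by inspecting, say, the leading monomial $\binom{2k}{l}z_{11}^{2k-l}z_{12}^{l}$ in the explicit expansion there---and restricts to the desired $\Z$-isomorphism $\Phi$.

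Since $\Delta\subset\GL_2(\Q)$ and $\Phi_\Q$ is $\GL_2(\Q)$-equivariant, $\Phi$ is automatically $\Delta$-equivariant, and the $\Delta$-stability of $L_{2k}$ immediately transports to $\cL_{2k,0}$. To incorporate the twist, the identity $\alpha\nu=(\det\alpha)\nu$ (which follows from $\nu=\det z$) gives $\alpha(\nu^{m}\psi)=(\det\alpha)^{m}\nu^{m}(\alpha\psi)$ for $\psi\in\cL_{2k,0}$, so multiplication by $\nu^{m}$ is a $\Delta$-equivariant $\Z$-linear injection realizing $\cL_{2k,m}$ as $\cL_{2k,0}$ with the action twisted by $\det^{m}$. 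Combining with $\Phi$ and reducing modulo $p^{n}$ yields the required isomorphism $\cL_{2k,m}/p^{n}\cL_{2k,m}\cong V_{2k,m}$ of representations of $\GL_2(\Z/p^n\Z)$. The one substantive step is the identification of $\cL_{2k,0}\otimes\Q$ with the irreducible $\GL_2(\Q)$-representation of highest weight $(2k,0)$; integrality of the $\phi_{k,l}$ is already handled by Lemma \ref{l:phi_kl}, so I do not anticipate any real obstacle beyond the careful alignment of the two divided-power bases through $\Phi_\Q$.
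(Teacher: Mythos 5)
Your proposal is correct but follows a genuinely different route from the paper. The paper proves $\Delta$-stability of $\cL_{2k,m}$ directly, by computing the actions of $D$, $E$, and $H$ on the basis $\{\phi_{k,l}\}$ and then the actions of the unipotent generators $\begin{psmallmatrix}1&0\\1&1\end{psmallmatrix}, \begin{psmallmatrix}1&1\\0&1\end{psmallmatrix}$ of $\SL_2(\Z)$ by exponentiation; it then \emph{defines} the map $f(\phi_{k,l}\cdot\nu^m)=\binom{2k}{l}z_1^{2k-l}z_2^l$ and reads off its equivariance up to twist from those computations. You instead pass to $\Q$, observe that $\phi^k$ is a highest-weight vector of weight $(2k,0)$ so that $\cL_{2k,0}\otimes\Q$ is the irreducible $\GL_2(\Q)$-representation of that highest weight, invoke Schur's lemma to obtain the equivariant isomorphism $\Phi_\Q$ with $L_{2k}\otimes\Q$ normalized by $z_1^{2k}\mapsto\phi^k$, and then recover the lattice isomorphism by matching divided-power bases via $\Phi_\Q D = D\Phi_\Q$. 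The two approaches are essentially equivalent in content (your $\Phi_\Q$ composed with $\nu^m$-multiplication is the inverse of the paper's $f$), but yours trades the elementary hands-on verification for highest-weight theory and Lie-algebra equivariance, which avoids having to compute $D\phi_{k,l}$, $E\phi_{k,l}$, $H\phi_{k,l}$ and the exponentials explicitly. The paper's computation, by contrast, is entirely self-contained and stays within $\Z$ throughout, never needing irreducibility over $\Q$. One small point worth making explicit in your argument is that the linear independence of $\{\phi_{k,l}\}_{0\le l\le 2k}$ (which you address via the leading monomial $\binom{2k}{l}z_{11}^{2k-l}z_{12}^l$) is what guarantees the dimension count $\dim_\Q\cL_{2k,0}\otimes\Q = 2k+1$, needed for the identification with the full highest-weight module; once that is noted, there is no gap.
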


\begin{proof}
Let $\Delta' \subset \Delta$ be the submonoid consisting of diagonal matrices and put $\Gamma = \SL_2(\Z)$, so that $\Delta = \Gamma \Delta' \Gamma$.
Then it follows from the proof of Lemma \ref{l:phi_kl} that 
\[
 \alpha \phi_{k,l} = a_1^{2k-l} a_2^l \phi_{k,l}
\]
for $\alpha = \begin{psmallmatrix} a_1 & 0 \\ 0 & a_2 \end{psmallmatrix} \in \Delta'$ and $0 \le l \le 2k$.
Also, we have $\alpha \nu^m = (\det \alpha)^m \nu^m$ for $\alpha \in \Delta$.
This shows that $\mathcal{L}_{2k,m}$ is stable under the action of $\Delta'$.
Next we consider the action of $\Gamma$.
Let $H = ED-DE$ be the action of $\begin{psmallmatrix} 1 & 0 \\ 0 & -1 \end{psmallmatrix} \in \gl_2(\Z)$ on $\Z[z_{11}, z_{12}, z_{21}, z_{22}]$.
Then we have
\[
 H \phi_{k,l} = (2k-2l) \phi_{k,l}
\]
by the above formula.
Also, we have
\[
 D \phi_{k,l} = (l+1) \phi_{k,l+1}
\]
by definition, and 
\[
 E \phi_{k,l} = (2k-l+1) \phi_{k,l-1}
\]
by induction on $l$, where we interpret $\phi_{k,-1} = 0$.
From this, we deduce that 
\begin{align*}
 \begin{pmatrix} 1 & 0 \\ 1 & 1 \end{pmatrix} \phi_{k,l} 
 & = \sum_{i=0}^{\infty} \frac{1}{i!} D^i \phi_{k,l}
 = \sum_{i=0}^{2k-l} \binom{l+i}{i} \phi_{k,l+i}, \\
 \begin{pmatrix} 1 & 1 \\ 0 & 1 \end{pmatrix} \phi_{k,l} 
 & = \sum_{i=0}^{\infty} \frac{1}{i!} E^i \phi_{k,l}
 = \sum_{i=0}^l \binom{2k-l+i}{i} \phi_{k,l-i}.
\end{align*}
Since $\Gamma$ is generated by $\begin{psmallmatrix} 1 & 0 \\ 1 & 1 \end{psmallmatrix}, \begin{psmallmatrix} 1 & 1 \\ 0 & 1 \end{psmallmatrix}$, this shows that $\mathcal{L}_{2k,m}$ is stable under the action of $\Gamma$.
Hence the first assertion follows.

Let $f:\cL_{2k,m} \rightarrow L_{2k}$ be the isomorphism of $\Z$-modules given by 
\[
 f(\phi_{k,l} \cdot \nu^m) = \tbinom{2k}{l} z_1^{2k-l} z_2^l
\]
for $0 \le l \le 2k$.
Then the above formulas imply that
\[
 f(\alpha \phi) = (\det \alpha)^m \alpha f(\phi)
\]
for $\alpha \in \Delta$ and $\phi \in \cL_{2k,m}$.
Hence the second assertion follows.
\end{proof}

Now assume that $p \ne 2$ (so that the integers $k_r$ are even).
Put $\cR = \Z[1/2][z_{11}, z_{12}, z_{21}, z_{22}]$ and
\[
 \psi_r = (-1)^{k_r/2} 2^{-k_r} \phi_{k_r,k_r} \cdot \nu^{p^n-k_r} \in \cR
\]
for $0 \le r \le 2^n-1$.
Then we have
\begin{equation}
\label{eq:main}
 \sum_{r=0}^{2^n-1} (-1)^r \psi_r \in p^n \cR, 
\end{equation}
whose proof is postponed to Sections \ref{s:some} and \ref{s:more}.

In the rest of this section, we deduce Theorem \ref{eq:main} from \eqref{eq:main}.
For $0 \le r \le 2^n-1$, put
\[
 \cM_r = \cL_{2k_r,p^n-k_r} \otimes_\Z \Z[1/2],
\]
so that $\psi_r \in \cM_r$.
By Lemma \ref{l:another}, we have $\cM_r/p^n \cM_r \cong V_{2k_r,p^n-k_r}$ as representations of $\GL_2(\Z/p^n\Z)$.
Also, we have $p^n \cM_r \subset \cM_r \cap p^n \cR$, which is an equality if $r=0$ since $\cM_0$ is generated by $\psi_0 = \nu^{p^n}$.
Let $\cN_0$ be the image of the equivariant injective map
\[
 \cM_0/p^n \cM_0 = \cM_0/(\cM_0 \cap p^n \cR) \rightarrow (\cM_0 \oplus \cM) / ((\cM_0 \oplus \cM) \cap p^n \cR),
\]
where $\cM = \bigoplus_{r=1}^{2^n-1} \cM_r$.
Let $\cN$ be the preimage of $\cN_0$ under the equivariant injective map
\[
 \cM/(\cM \cap p^n \cR) \rightarrow (\cM_0 \oplus \cM) / ((\cM_0 \oplus \cM) \cap p^n \cR).
\]
Since $\cN_0$ is generated by the image of $\psi_0$, it follows from \eqref{eq:main} that the induced map $\cN \rightarrow \cN_0$ is an isomorphism.
Hence $\cN_0$ appears in the preimage of $\cN$ under the equivariant surjective map
\[
 \cM/p^n \cM \rightarrow \cM/(\cM \cap p^n \cR)
\]
as a quotient.
Since $\cN_0 \cong V_{0,p^n}$ and $\cM/p^n \cM \cong \bigoplus_{r=1}^{2^n-1} V_{2k_r,p^n-k_r}$, Theorem \ref{t:main} follows.

\section{Congruences for hypergeometric polynomials}
\label{s:some}

In this section, we reduce the proof of \eqref{eq:main} to congruences for certain hypergeometric polynomials.

\subsection{Reduction to congruences for hypergeometric polynomials}

Fix an odd prime $p$.
For any integer $r \ge 0$, let $k_r \ge 0$ be the even integer defined by \eqref{eq:kr}.
For any integer $m \ge 0$, put
\[
 F_m(t) = {}_2F_1(-m,-m+1/2;1;t), 
\]
where ${}_2F_1(a,b;c;t)$ denotes the hypergeometric function.
Note that $F_m(t)$ is in fact a polynomial given by
\[
 F_m(t) = \sum_{i=0}^m \binom{m}{i} \binom{m-1/2}{i} t^i \in \Z[1/2][t].
\]
Then we have the following congruences.

\begin{thm}
\label{t:hyp}
For any integer $n \ge 1$, we have
\[
 \sum_{r=0}^{2^n-1} (-1)^r t^{(p^n-1-k_r)/2} F_{k_r/2}(1-t) \in p^n \Z[1/2][t]. \]
\end{thm}

First we explain how this theorem implies \eqref{eq:main}.
Recall that 
\[
 \phi(z) = z_{11}^2 + z_{21}^2, \quad
 \nu(z) = z_{11} z_{22} - z_{12} z_{21}.
\]
Put
\[
 \psi(z) = z_{12}^2 + z_{22}^2, \quad
 \chi(z) = z_{11} z_{12} + z_{21} z_{22},
\]
so that 
\begin{equation}
\label{eq:Dphi}
 D \phi = 2 \chi, \quad
 D^2 \phi = 2 \psi, \quad
 D^3 \phi = 0.
\end{equation}
Put $\xi = \phi \psi = \nu^2 + \chi^2$ and $\eta = \chi^2$.

\begin{lem}
For any even integer $k \ge 0$, we have
\[
 \frac{1}{k!} D^k \phi^k = 2^k
 \sum_{i=0}^{k/2} \binom{k/2}{i} \binom{(k-1)/2}{i} \xi^i \eta^{k/2-i}.
\]
\end{lem}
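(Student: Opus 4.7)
The strategy is to apply the multinomial Leibniz rule to the $k$-fold product $\phi \cdot \phi \cdots \phi$ and exploit the relations in \eqref{eq:Dphi}. By Leibniz,
\[
 D^k \phi^k = \sum_{\substack{a_1,\ldots,a_k \ge 0 \\ a_1 + \cdots + a_k = k}} \frac{k!}{a_1! \cdots a_k!} \prod_{j=1}^k D^{a_j} \phi,
\]
and the vanishing $D^3 \phi = 0$ forces each $a_j \in \{0,1,2\}$. This is the only reason the sum collapses to something manageable, so it is the heart of the argument.

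I would then group the surviving tuples by the number $c$ of indices with $a_j = 2$. Since the $a_j$'s sum to $k$ and lie in $\{0,1,2\}$, such a tuple has exactly $c$ zeros and $k-2c$ ones, with $0 \le c \le k/2$. The number of such tuples is $k!/(c!(k-2c)!c!)$, each carries an internal factor $k!/((0!)^c(1!)^{k-2c}(2!)^c) = k!/2^c$, and the product $\prod_j D^{a_j}\phi$ equals $\phi^c (2\chi)^{k-2c} (2\psi)^c = 2^{k-c} \phi^c \psi^c \chi^{k-2c}$. Substituting $\phi\psi = \xi$ and using that $k - 2c$ is even (since $k$ is even) to write $\chi^{k-2c} = \eta^{k/2-c}$, and finally dividing by $k!$, I expect to obtain
\[
 \frac{1}{k!} D^k \phi^k = \sum_{c=0}^{k/2} \frac{k!}{c!(k-2c)!c!} \, 2^{k-2c} \, \xi^c \, \eta^{k/2-c}.
\]

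It then remains to verify the purely numerical identity
\[
 \frac{k!}{c!(k-2c)!c!} \, 2^{k-2c} = 2^k \binom{k/2}{c} \binom{(k-1)/2}{c}.
\]
This is elementary: writing $k!/(k-2c)! = k(k-1)\cdots(k-2c+1)$ and splitting the $2c$ factors into $c$ even and $c$ odd ones, the even factors contribute $2^c (k/2)!/(k/2-c)!$ and the odd factors contribute $(k-1)(k-3)\cdots(k-2c+1) = 2^c c! \binom{(k-1)/2}{c}$. Plugging these in and simplifying yields exactly the right-hand side. I expect no essential obstacle; the only care points are the evenness of $k-2c$, which is what permits passage from $\chi^{k-2c}$ to a power of $\eta$, and the multinomial bookkeeping in the Leibniz expansion.
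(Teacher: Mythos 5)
Your proof is correct and follows essentially the same route as the paper: the multinomial Leibniz expansion, truncation via $D^3\phi=0$, counting tuples with $c$ twos (hence $c$ zeros and $k-2c$ ones), and the elementary identity $\binom{k}{2c}\binom{2c}{c}2^{-2c} = \binom{k/2}{c}\binom{(k-1)/2}{c}$. You spell out the even/odd factorization in the last identity a bit more explicitly than the paper, but the argument is the same.
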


\begin{proof}
By the Leibniz rule, we have
\[
 D^k \phi^k = \sum_{m_1, \dots, m_k} \frac{k!}{m_1! \cdots m_k!} D^{m_1} \phi \cdots D^{m_k} \phi, 
\]
where $m_1, \dots, m_k$ run over non-negative integers such that $m_1 + \dots + m_k = k$.
By \eqref{eq:Dphi}, we may assume that $m_1, \dots, m_k \le 2$.
For $m \in \{ 0, 1, 2 \}$, let $N_m$ be the number of $0 \le j \le k$ such that $m_j = m$.
Then we have $N_0 + N_1 + N_2 = k$ and $N_1 + 2N_2 = k$, so that $N_0 = N_2$ and $N_1 = k - 2N_2$.
Hence we have
\begin{align*}
 D^k \phi^k & = \sum_{i=0}^{k/2} \binom{k}{2i} \binom{2i}{i} \frac{k!}{2^i}
 \phi^i (2\chi)^{k-2i} (2\psi)^i \\
 & = 2^k k! \sum_{i=0}^{k/2} \binom{k}{2i} \binom{2i}{i}
2^{-2i} (\phi \psi)^i \chi^{k-2i}.
\end{align*}
Since
\[
 \binom{k}{2i} \binom{2i}{i} 2^{-2i}
 = \frac{k(k-1) \cdots (k-2i+1)}{2^{2i} (i!)^2}
 = \binom{k/2}{i} \binom{(k-1)/2}{i}, 
\]
the assertion follows.
\end{proof}

By this lemma, we have
\begin{align*}
 \psi_r & = (-1)^{k_r/2} \nu^{p^n-k_r} \sum_{i=0}^{k_r/2} \binom{k_r/2}{i} \binom{(k_r-1)/2}{i} \xi^i \eta^{k_r/2-i} \\
 & = (-1)^{(p^n-1)/2} \nu \eta^{(p^n-1)/2} (1-\xi \eta^{-1})^{(p^n-1-k_r)/2}
 \sum_{i=0}^{k_r/2} \binom{k_r/2}{i} \binom{(k_r-1)/2}{i} (\xi \eta^{-1})^i 
\end{align*}
in $\cR[1/\eta]$.
Thus, to prove \eqref{eq:main}, it suffices to show that 
\[
 \sum_{r=0}^{2^n-1} (-1)^r (1-t)^{(p^n-1-k_r)/2} F_{k_r/2}(t) \in p^n \Z[1/2][t].
\]
But this is equivalent, via the change of variable $t \mapsto 1-t$, to Theorem \ref{t:hyp}.

\subsection{Reduction to congruences for binomial coefficients}

To prove Theorem \ref{t:hyp}, we reduce it to congruences for binomial coefficients.

\begin{lem}
For $m \ge 0$, we have
\[
 F_m(1-t) = \sum_{i=0}^m (-1)^i \binom{m}{i} \binom{-1/2-i}{m} t^{m-i}.
\]
\end{lem}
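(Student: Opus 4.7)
The plan is to expand $F_m(1-t)$ by the binomial theorem and compare coefficients of $t^{m-s}$ on both sides. Substituting
\[
 (1-t)^i = \sum_j (-1)^j \binom{i}{j} t^j
\]
into the defining expression of $F_m$ and swapping the order of summation gives, for each $0 \le s \le m$, that the coefficient of $t^{m-s}$ in $F_m(1-t)$ equals
\[
 (-1)^{m-s} \sum_i \binom{m}{i} \binom{m-1/2}{i} \binom{i}{m-s},
\]
while the coefficient predicted by the right-hand side of the lemma is $(-1)^s \binom{m}{s} \binom{-1/2-s}{m}$.

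For the inner sum, I would apply the standard reshuffling $\binom{m}{i}\binom{i}{m-s} = \binom{m}{s}\binom{s}{i-m+s}$ to pull $\binom{m}{s}$ outside. After the change of index $k = i-m+s$, the remaining sum becomes $\sum_{k=0}^{s} \binom{s}{k} \binom{m-1/2}{k+m-s}$; using the symmetry $\binom{s}{k} = \binom{s}{s-k}$ and reindexing once more, this takes the Chu--Vandermonde shape $\sum_{j} \binom{s}{j}\binom{m-1/2}{m-j}$, which evaluates to $\binom{m+s-1/2}{m}$.

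To finish, I would invoke the upper-negation identity $\binom{-x}{m} = (-1)^m \binom{x+m-1}{m}$ with $x = 1/2+s$ to rewrite the target $\binom{-1/2-s}{m} = (-1)^m \binom{m+s-1/2}{m}$. Comparing the two sides and checking that the signs $(-1)^{m-s}$, $(-1)^{s}$, and $(-1)^m$ combine correctly then concludes the proof.

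There is no substantial obstacle here; the argument is essentially bookkeeping. The only mild point worth remarking on is that Chu--Vandermonde is being applied with the non-integer upper parameter $m-1/2$, but this is legitimate because $\sum_j \binom{x}{j}\binom{y}{n-j} = \binom{x+y}{n}$ is a polynomial identity in $x,y$ valid for every non-negative integer $n$.
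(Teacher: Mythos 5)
Your proof is correct and follows essentially the same route as the paper's. The paper simply cites Gould's identity (3.17), namely $\sum_{i=0}^m \binom{m}{i}\binom{x}{i}t^i = \sum_{i=0}^m \binom{m}{i}\binom{x+i}{m}(t-1)^{m-i}$, and then applies upper negation; your argument amounts to reproving this identity inline via a Chu--Vandermonde computation before finishing with the same upper-negation step.
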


\begin{proof}
By \cite[(3.17)]{gould}, we have
\[
 \sum_{i=0}^m \binom{m}{i} \binom{x}{i} t^i 
 = \sum_{i=0}^m \binom{m}{i} \binom{x+i}{m} (t-1)^{m-i}, 
\]
so that 
\[
 F_m(1-t) = \sum_{i=0}^m (-1)^{m-i} \binom{m}{i} \binom{m-1/2+i}{m} t^{m-i}.
\]
Since 
\[
 (-1)^m \binom{m-1/2+i}{m} = \binom{-i-1/2}{m}, 
\]
the assertion follows.
\end{proof}

By this lemma, Theorem \ref{t:hyp} is equivalent to 
\[
 \sum_{r=0}^{2^n-1} (-1)^r \sum_{i=0}^{(p^n-1)/2} (-1)^i \binom{k_r/2}{i} \binom{-i-1/2}{k_r/2} t^{(p^n-1)/2-i} \in p^n \Z[1/2][t]. 
\]
Hence Theorem \ref{t:hyp} follows from the following congruences.

\begin{thm}
\label{t:key}
For $n \ge 1$ and $i \ge 0$, we have
\[
 \sum_{r=0}^{2^n-1} (-1)^r \binom{k_r/2}{i} \binom{-1/2-i}{k_r/2} \equiv 0 \mod p^n.
\]
\end{thm}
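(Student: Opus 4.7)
The plan is to prove Theorem \ref{t:key} by passing to a purely integer-valued statement that is amenable to induction, exploiting the fact that $-1/2$ is a $p$-adic limit of positive integers. Define $m(r) = k_r/2 = \frac{p-1}{2} \sum_{j \ge 0} r_j p^j$ for $r = \sum_j r_j 2^j$, and observe that
\[
 m(2^a - 1) = \frac{p-1}{2}(1 + p + \cdots + p^{a-1}) = \frac{p^a - 1}{2}.
\]
As $a \to \infty$, this tends to $-1/2$ in $\Z_p$. Accordingly, the idea is to prove the integer-valued generalization
\[
 \sum_{r=0}^{2^n-1} (-1)^r \binom{m(r)}{i} \binom{m(2^a - 1) - i}{m(r)} \equiv 0 \pmod{p^n}
\]
for all $n \ge 1$, $i \ge 0$, and all $a$ sufficiently large in terms of $n$, and then to recover Theorem \ref{t:key} by passing to the limit $a \to \infty$.

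The reduction step itself is routine. Since the numerator of $\binom{x}{k}$ is a polynomial in $x$ with integer coefficients, a congruence $x \equiv y \pmod{p^N}$ in $\Z_p$ implies $\binom{x}{k} \equiv \binom{y}{k} \pmod{p^{N - v_p(k!)}}$. In our sum, $k = m(r)$ is bounded by $(p^n - 1)/2$, so Legendre's formula yields a uniform bound on $v_p(m(r)!)$; choosing $a$ large enough (say $a \ge n + v_p(((p^n - 1)/2)!)$) ensures that every term of the integer sum matches its $p$-adic counterpart modulo $p^n$, while the signs $(-1)^r$ are clearly unaffected.

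For the integer-valued statement itself, I would proceed by induction on $n$. The base case $n = 1$ has only two nonzero terms and reduces to a pair of binomial identities modulo $p$, both provable using Wilson's and Fermat's little theorems together with the classical fact that $\binom{p-1}{k} \equiv (-1)^k \pmod p$. For the inductive step, the natural move is to split $r = 2s + \epsilon$ with $\epsilon \in \{0,1\}$, use $m(r) = p\,m(s) + \epsilon (p-1)/2$, and rewrite the sum over $r \in \{0,\ldots,2^n - 1\}$ as a combination of two sums indexed by $s \in \{0,\ldots,2^{n-1} - 1\}$. The main analytic ingredient I would reach for is the theorem of Granville \cite{granville} refining Lucas' congruence modulo prime powers: it expresses $\binom{pA + b}{pC + d}$ modulo $p^n$ as $\binom{A}{C}\binom{b}{d}$ plus explicit carry corrections, allowing one to connect the $n$-case sum to the $(n-1)$-case sum.

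The main obstacle, as the excerpt hints, is that a naive induction on $n$ alone is unlikely to close: the carry corrections in Granville's congruence introduce additional terms that must be tracked by auxiliary parameters, and the inductive hypothesis needs to be strong enough to absorb them. My plan would therefore be to formulate a strengthened inductive statement in which the index $i$ is split according to a $p$-adic scale controlled by two integer parameters $k$ and $j$ (corresponding to the paper's Theorem \ref{t:gen2}), and to prove it by a double induction on $(k,j)$, recovering the desired statement at the extremes. Finding the precise form of this strengthening---sharp enough to be amenable to induction yet loose enough to be verifiable with Granville's congruence at each step---is the delicate combinatorial heart of the argument, and is where I expect to spend the bulk of the work.
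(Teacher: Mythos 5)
Your reduction step is correct and matches the paper: replace $-1/2$ by $m(2^a-1)=(p^a-1)/2$ for $a$ sufficiently large in terms of $n$, then invoke $p$-adic continuity of $x\mapsto\binom{x}{k}$; your bound $a\ge n+v_p(((p^n-1)/2)!)$ is a correct way to make ``$a\gg 0$'' precise, and the resulting integer statement is exactly Theorem~\ref{t:gen1}. You also correctly flag Granville's refinement of Lucas' theorem modulo prime powers (Proposition~\ref{p:granville}) and a strengthened statement indexed by two auxiliary parameters $(k,j)$ as the needed machinery, and you correctly identify that the difficulty is in formulating that strengthening.

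But that is precisely where the proposal stops, and the partial sketch you give points in a different direction from what actually works. The paper does \emph{not} run an induction on $n$: for fixed $n$, $a$, $i$ it defines sums $X_{j,k}(r,i)$ over subsets $S_k(r)\subseteq\{0,\dots,2^n-1\}$ built from the reflection involutions $d_\ell(s)=2^\ell-1-s$ (Theorem~\ref{t:gen2}), not from the parity split $r=2s+\epsilon$ you propose, and proves $X_{j,k}(r,i)\equiv 0\bmod p^k$ by induction on $k$ with a backward induction on $j$ inside each step. Moreover the parameter $j$ is not a ``$p$-adic scale'' along which $i$ is split; rather, $j$ truncates each factorial in the binomial products to $t_j(\cdot)!$, keeping only the $p$-adic digits from place $j$ onward, and Granville's Corollary~1 enters by showing that the one-layer ratios $y_j(s,i)=x_j(s,i)/x_{j+1}(s,i)$ agree modulo a suitable power of $p$ when $s$ is replaced by a reflection (Lemma~\ref{l:y}). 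The inductive engine is Lemma~\ref{l:ind}, which peels off one such $y_j$-factor at a time, together with a separate base case $j=n-k$ (Lemma~\ref{l:base}) that uses a carry analysis to show $X_{n-k+1,k}(r,i)=0$ outright when the relevant $y_{n-k}$ is a $p$-adic unit. None of this apparatus appears in your sketch, and the route you gesture at---digit-splitting $m(2s+\epsilon)=p\,m(s)+\epsilon b$ and inducting on $n$---would still have to control carries between the low and high digit blocks of $i$, which is exactly the combinatorial heart you acknowledge as unresolved. So the frame is right, but there is a genuine gap where the proof itself must live.
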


Note that the left-hand side belongs to $\Z_{(p)}$ since $\binom{x}{k} \in \Z_p$ for $x \in \Z_p$ and $k \ge 0$.
The proof of this theorem will be given in the next section. 

\section{Congruences for binomial coefficients}
\label{s:more}

In this section, we give a proof of Theorem \ref{t:key}, which completes the proof of Theorem \ref{t:main}.
In fact, we prove the following generalization of Theorem \ref{t:key}.

\subsection{Generalized congruences}

Fix a prime $p$ and an integer $1 \le b \le p-1$.
For any integer $r \ge 0$, we write $r = \sum_{i=0}^\infty r_i 2^i$ with $r_i \in \{ 0, 1 \}$ and put 
\[
 m(r) = b \sum_{i=0}^\infty r_i p^i,
\]
which defines a monotonically increasing function.
If $p=2$ and $b=1$, then we have $m(r)=r$.
Also, for any integers $a \ge 0$ and $0 \le r \le 2^a-1$, we have
\begin{equation}
\label{eq:m}
 m(r) + m(d_a(r)) = m(2^a-1),
\end{equation}
where 
\[
 d_a(r) = 2^a-1-r.
\]
Note that the map $r \mapsto d_a(r)$ is an involution on $\{ 0,1,\dots,2^a-1\}$ induced by reflection about $2^{a-1}-1/2$.
In particular, we have
\begin{equation}
\label{eq:da}
 r < d_a(r) 
\end{equation}
for $a \ge 1$ and $0 \le r \le 2^{a-1} - 1$.

\begin{thm}
\label{t:gen1}
For $n \ge 1$, $a \ge n$, and $i \ge 0$, we have
\[
 \sum_{r=0}^{2^n-1} (-1)^r \binom{m(r)}{i} \binom{m(2^a-1)-i}{m(r)}
 \equiv 0 \mod p^n.
\]
\end{thm}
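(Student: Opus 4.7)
The plan is to prove Theorem \ref{t:gen1} inductively by introducing a more flexible intermediate statement (Theorem \ref{t:gen2}) parametrized by two additional integers $0 \le k \le n$ and $0 \le j \le n-k$, in such a way that the specialization $k = n$, $j = 0$ recovers Theorem \ref{t:gen1}. The parameter $k$ is designed to track how many of the top binary digits of the summation index $r$ have been processed: writing $r = r' + 2^{n-k} r''$ with $0 \le r' < 2^{n-k}$ and $0 \le r'' < 2^k$ yields $m(r) = m(r') + p^{n-k} m(r'')$ and factors the alternating sign as $(-1)^{r'}(-1)^{r''}$, so that the shift $p^{n-k} m(r'')$ can be regarded as a perturbation of $m(r')$ inside the two binomial coefficients.

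The technical engine of the induction is Granville's theorem \cite{granville} on base-$p$ expansions of binomial coefficients modulo $p^n$, which I would invoke to expand $\binom{m(r') + p^{n-k} m(r'')}{i}$ and $\binom{m(2^a-1)-i}{m(r') + p^{n-k} m(r'')}$ into a Lucas-type leading term plus explicit carry corrections. The leading terms would combine with the outer induction hypothesis on $n$ to produce the mod $p^n$ vanishing, while the correction terms --- themselves binomial coefficients twisted by controlled shifts of $i$ or of $m(2^a-1)-i$ by multiples of $p^{n-k}$ --- are absorbed by the auxiliary parameter $j$ of Theorem \ref{t:gen2}. The involution $r \mapsto d_a(r) = 2^a - 1 - r$ together with the symmetry $m(r) + m(d_a(r)) = m(2^a-1)$ from \eqref{eq:m} is used throughout to pair terms of opposite sign and to simplify carry sequences, and the assumption $a \ge n$ guarantees that the base-$p$ expansion of $m(2^a-1)$ has enough room that none of these digit-shifts overflow.

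The proof is then carried out by double induction, on $k$ with an inner induction on $j$. The base $k = 0$ should reduce to an essentially trivial identity, the step $j \to j+1$ would use Granville's expansion to peel off one layer of carry-corrections, and the step $k \to k+1$ would combine the $j$-induction with the outer hypothesis for $n - 1$ applied to Theorem \ref{t:gen1}. Specializing to $(k, j) = (n, 0)$ at the end yields Theorem \ref{t:gen1}. As a sanity check, the $n = 1$ case reduces by Lucas's theorem to $1 - \binom{m(2^a-1)}{b} \equiv 0 \pmod p$ when $i = 0$, and to $\binom{m(2^a-1) - i}{b} \equiv 0 \pmod p$ for $1 \le i \le b$ (the latter because the bottom base-$p$ digit of $m(2^a-1)-i$ is $b - i < b$); this confirms that the mod $p$ cancellation is of exactly the shape a Granville-style induction should lift.

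The main obstacle, as the introduction hints, is pinning down the correct form of Theorem \ref{t:gen2}: the statement must be flexible enough that every carry-error produced by Granville's expansion appears as a lower-$(k, j)$ instance of the same statement, yet rigid enough that its $(k, j) = (n, 0)$ specialization coincides with the target sum. Identifying the right bookkeeping --- the precise shifts of $i$ and of $m(2^a - 1) - i$ by multiples of $p^{n-k}$, the correct power of $p$ in the modulus at each level, and the combinatorial multipliers that make the alternating sum telescope --- is the delicate point. Once the correct statement is in hand, each step of the double induction should reduce to a manageable application of Granville's theorem together with \eqref{eq:m}.
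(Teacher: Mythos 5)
Your high-level strategy matches the paper's closely: an intermediate Theorem \ref{t:gen2} parametrized by $0 \le k \le n$ and $0 \le j \le n-k$, a double induction on $k$ and $j$, Granville's congruence as the key arithmetic input, the involution $r \mapsto d_a(r)$ and the symmetry \eqref{eq:m} as the combinatorial backbone, and a specialization at $(k,j)=(n,0)$ at the end (the paper indeed identifies the left-hand side of Theorem \ref{t:gen1} with $X_{0,n}(0,i)$). The sanity check you give for $n=1$ is also correct.

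However, there are two concrete points where the plan as stated would not go through. First, your step ``$k \to k+1$ combines the $j$-induction with the outer hypothesis for $n-1$ applied to Theorem \ref{t:gen1}'' is not what the paper does, and I don't see how it could: the paper fixes $n$ and $a>n$ once and for all (the case $a=n$ is dispatched trivially by the involution $d_n$), and the intermediate quantities $X_{j,k}(r,i)$ are built from the sets $S_k(r) \subset \{0,\dots,2^n-1\}$ and from $m(2^a-1)$, $m(d_a(s))$, all of which depend on the original $n$ and $a$. The statement for smaller $k$ is not a copy of Theorem \ref{t:gen1} with a smaller $n$, so there is no opportunity to invoke ``the $n-1$ case of Theorem \ref{t:gen1}.'' Second, and more fundamentally, the real technical content is absent. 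The decomposition that actually works is not $r = r' + 2^{n-k}r''$; instead the paper defines sets $S_k(r)$ that interlace a ``low'' copy $S_k'(r) = \{r + 2^{n-k+1}r' : 0 \le r' \le 2^{k-1}-1\}$ with its reflection $S_k''(r) = d_n(S_k'(r))$, and the summand is not the original binomial product but the ratio of $p$-adic digit-truncated factorials $x_j(s,i) = t_j(m(2^a-1)-i)!/(t_j(i)! \, t_j(m(s)-i)! \, t_j(m(d_a(s))-i)!)$ together with its incremental ratios $y_j = x_j/x_{j+1}$. The whole $j$-induction (which runs \emph{backward} from $j=n-k$ to $j=0$, not $j\to j+1$) hinges on a delicate congruence $y_j(d_n(d_{n-k}(r)),i) \equiv y_j(d_n(r),i) \pmod{p^{n-j-k}}$ and a separate, rather intricate base case for $j = n-k$. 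You explicitly acknowledge that pinning down the right form of Theorem \ref{t:gen2} is the main obstacle; that missing content is precisely the $t_j$/$x_j$/$y_j$/$S_k(r)$ machinery, without which the induction cannot close.
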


Note that Theorem \ref{t:key} is a special case of this theorem.
Indeed, if $p \ne 2$ and $b = (p-1)/2$, then we have $m(r) = k_r/2$ and $m(2^a-1) = (p^a-1)/2$.
By continuity of the function $x \mapsto \binom{x}{k}$ on $\Z_p$ for $k \ge 0$, we have
\[
 \binom{(p^a-1)/2 - i}{k_r/2} \equiv \binom{-1/2 - i}{k_r/2} \mod p^n
\]
for $a \gg 0$.
Hence Theorem \ref{t:gen1} implies Theorem \ref{t:key}.

Also, if $a=n$, then Theorem \ref{t:gen1} obviously holds.
Indeed, putting
\[
 X = \sum_{r=0}^{2^n-1} (-1)^r \binom{m(r)}{i} \binom{m(2^n-1)-i}{m(r)},
\]
we have
\[
 X = \sum_r (-1)^r \frac{(m(2^n-1) -i)!}{i! \cdot (m(r) - i)! \cdot (m(d_n(r)) -i)!}
\]
by \eqref{eq:m}, where $r$ runs over integers such that $0 \le r \le 2^n-1$ and $i \le \min \{ m(r), m(d_n(r)) \}$.
Changing the variable $r \mapsto d_n(r)$, we deduce that $X = -X$, so that $X=0$.
Hence we may assume that $a > n$.

\subsection{A result of Granville}

First we introduce some notation.
For any integers $n,j,l \ge 0$, put $t_j(n) = [n/p^j]$ and let $0 \le t_{j,l}(n) \le p^l-1$ be the integer such that $t_{j,l}(n) \equiv t_j(n) \mod p^l$.
Namely, if we write $n = \sum_{i=0}^\infty n_i p^i$ with $n_i \in \{0,1,\dots,p-1\}$, then we have
\[
 t_j(n) = \sum_{i=j}^\infty n_i p^{i-j}, \quad
 t_{j,l}(n) = \sum_{i=j}^{j+l-1} n_i p^{i-j}.
\]
Note that
\begin{equation}
\label{v_p(n!)}
 v_p(t_j(n)!) = \sum_{l=1}^\infty t_{j+l}(n)
\end{equation}
and 
\begin{equation}
\label{eq:t}
 t_j(n) + t_j(n') \le t_j(n+n') 
\end{equation}
for $n,n' \ge 0$.
Put $\epsilon_j(n,n') = 1$ if there is a carry in the $j$th digit when adding $n$ and $n'$ in base $p$, and $\epsilon_j(n,n') = 0$ otherwise.
Then we have
\[
 t_{j,1}(n+n') = t_{j,1}(n) + t_{j,1}(n') + \epsilon_{j-1}(n,n') - p \epsilon_j(n,n'),
\]
where we interpret $\epsilon_{-1}(n,n') = 0$.

\begin{lem}[{\cite[(2.4)]{granville}}]
\label{l:carry}
For $n,n',j \ge 0$, we have
\[
 t_j(n+n') - t_j(n) - t_j(n') = \epsilon_{j-1}(n,n').
\]
In other words, the equality in \eqref{eq:t} holds if and only if there is no carry in the $(j-1)$th digit when adding $n$ and $n'$ in base $p$.
\end{lem}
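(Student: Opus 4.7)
The plan is to reduce the identity to the elementary observation that the carry generated by the low $j$ digits contributes exactly $0$ or $1$ to $t_j(n+n')$ beyond $t_j(n) + t_j(n')$, and then match this binary quantity with $\epsilon_{j-1}(n,n')$ using the paper's convention.

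First, I would decompose $n = t_j(n)\,p^j + s$ and $n' = t_j(n')\,p^j + s'$, where $s = \sum_{i=0}^{j-1} n_i p^i$ and $s' = \sum_{i=0}^{j-1} n'_i p^i$ both lie in $[0, p^j)$. Adding and dividing by $p^j$ gives
\[
 t_j(n + n') \;=\; t_j(n) + t_j(n') + \left\lfloor \frac{s + s'}{p^j} \right\rfloor,
\]
and since $0 \le s + s' \le 2(p^j - 1)$, the remaining floor equals $1$ when $s + s' \ge p^j$ and $0$ otherwise.

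Second, I would identify this $\{0,1\}$-valued quantity with $\epsilon_{j-1}(n,n')$. The paper's convention is made unambiguous by the displayed formula for $t_{j,1}(n+n')$ that precedes the lemma: comparing it with the standard rule $n_j + n'_j + (\text{carry into position }j) = (n+n')_j + p \cdot (\text{carry out of position }j)$ shows that $\epsilon_{j-1}(n,n')$ is precisely the carry into the $j$th digit. A brief induction on $j$ (or direct inspection of long addition in base $p$) shows that this carry-in equals $1$ exactly when $s + s' \ge p^j$. Combining the two identifications yields the claimed formula, and the equivalent reformulation is immediate because the inequality in \eqref{eq:t} is an equality precisely when $\epsilon_{j-1}(n,n') = 0$.

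There is no real obstacle here, since the argument is essentially a definitional unwinding. The only point requiring care is matching the paper's phrasing ``carry in the $j$th digit'' to the floor-based description above; the displayed identity for $t_{j,1}(n+n')$ pins this convention down, so no ambiguity remains and the proof amounts to a one-line base-$p$ truncation followed by a floor computation.
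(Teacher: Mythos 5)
Your proof is correct. Note first that the paper does not actually prove this lemma: it is cited verbatim from Granville's paper \cite[(2.4)]{granville}, and the displayed recursion for $t_{j,1}(n+n')$ preceding the lemma is likewise quoted without derivation. So there is no in-paper argument to compare against; your job was effectively to reconstruct the (omitted) standard proof, and you did so cleanly. (Incidentally, the statement as printed has a typo --- it should read $t_j(n+n') - t_j(n) - t_j(n')$ --- which you correctly read through.)

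Your decomposition $n = t_j(n)p^j + s$, $n' = t_j(n')p^j + s'$ with $s,s' \in [0,p^j)$, followed by dividing $n+n'$ by $p^j$, immediately gives
\[
 t_j(n+n') = t_j(n) + t_j(n') + \left\lfloor \frac{s+s'}{p^j} \right\rfloor,
\]
and the bound $s+s' \le 2(p^j-1) < 2p^j$ forces the floor to be $0$ or $1$, equal to $1$ exactly when $s+s' \ge p^j$. The only point that required interpretive care was pinning down the paper's convention for ``carry in the $j$th digit,'' and you handled it by the right device: reading the displayed identity $t_{j,1}(n+n') = t_{j,1}(n) + t_{j,1}(n') + \epsilon_{j-1}(n,n') - p\,\epsilon_j(n,n')$ as the digitwise addition rule, which identifies $\epsilon_{j-1}(n,n')$ as the carry entering position $j$, i.e.\ the indicator of $s+s' \ge p^j$. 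That matches the floor, and the reformulation in terms of equality in \eqref{eq:t} is then immediate. This is the standard argument; there is no gap.
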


In \cite{granville}, Granville generalized Lucas' theorem on binomial coefficients modulo $p$ and gave a remarkable formula for binomial coefficients modulo an arbitrary power of $p$.
The proof of his formula is based on the following, which also plays a key role in the proof of Theorem \ref{t:gen1}.

\begin{prop}[{\cite[Corollary 1]{granville}}]
\label{p:granville}
For $n,j,l \ge 0$, we have
\[
 t_j(n)!/t_{j+1}(n)! \, p^{t_{j+1}(n)} = t_j(n)!_p \equiv
 \delta^{t_{j+l}(n)} t_{j,l}(n)!_p \mod p^l,
\]
where 
\[
 \delta = 
 \begin{cases}
  1 & \text{if $p=2$ and $l \ge 3$;} \\
  -1 & \text{otherwise.}
 \end{cases}
\]
\end{prop}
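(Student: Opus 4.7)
The plan is to derive both assertions from a generalized Wilson congruence for the multiplicative group $(\Z/p^l\Z)^\times$. The first equality
\[
 t_j(n)!/(t_{j+1}(n)! \, p^{t_{j+1}(n)}) = t_j(n)!_p
\]
is purely definitional: the multiples of $p$ in $[1,t_j(n)]$ are $p, 2p, \ldots, t_{j+1}(n) \cdot p$, since $t_{j+1}(n) = [t_j(n)/p]$; their product is $p^{t_{j+1}(n)} \cdot t_{j+1}(n)!$, and removing these factors from $t_j(n)!$ leaves exactly $t_j(n)!_p$.

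For the main congruence, the key input is the identity
\[
 \prod_{\substack{1 \le i \le p^l \\ p \nmid i}} i \equiv \delta \pmod{p^l},
\]
i.e., the product of all elements of $(\Z/p^l\Z)^\times$ equals $\delta$. This is the classical generalization of Wilson's theorem: in any finite abelian group, every element of order $>2$ pairs off with its inverse, so the product of all elements equals the product of those of order dividing $2$. For $p$ odd (and also for $p=2$ with $l \le 2$), $(\Z/p^l\Z)^\times$ is cyclic, has $-1$ as its unique element of order $2$, and the product is $-1 = \delta$. For $p=2$ and $l \ge 3$, the group is isomorphic to $\Z/2\Z \times \Z/2^{l-2}\Z$ and has exactly three elements of order $2$, namely $-1$, $2^{l-1}-1$, and $2^{l-1}+1$; their product is
\[
 -(2^{l-1}-1)(2^{l-1}+1) = -(2^{2l-2}-1) \equiv 1 \pmod{2^l},
\]
matching $\delta = 1$.

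With this in hand, write $t_j(n) = p^l \cdot t_{j+l}(n) + t_{j,l}(n)$ with $0 \le t_{j,l}(n) < p^l$, and partition the integers in $[1,t_j(n)]$ coprime to $p$ into $t_{j+l}(n)$ consecutive blocks of length $p^l$, followed by a trailing block of length $t_{j,l}(n)$. In the $k$th complete block $(kp^l, (k+1)p^l]$, the residues modulo $p^l$ run through all of $(\Z/p^l\Z)^\times$, so the block contributes $\equiv \delta \pmod{p^l}$; the trailing block reduces to $t_{j,l}(n)!_p$. Multiplying yields
\[
 t_j(n)!_p \equiv \delta^{t_{j+l}(n)} \cdot t_{j,l}(n)!_p \pmod{p^l},
\]
as required.

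The only delicate step is the parity computation when $p=2$ and $l \ge 3$: one must recognize that $(\Z/2^l\Z)^\times$ ceases to be cyclic and acquires three elements of order $2$, which is precisely what forces $\delta$ to flip sign. Once this group-theoretic fact is in place, the remainder is a clean block decomposition of the $p$-deprived factorial.
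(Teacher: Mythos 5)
Your proof is correct. Note, however, that the paper does not actually prove Proposition \ref{p:granville}: it is quoted verbatim as \cite[Corollary 1]{granville}, so there is no in-paper argument to compare against. Your argument --- the definitional cancellation of the $p$-part of $t_j(n)!$, followed by the block decomposition of $[1,t_j(n)]$ into $t_{j+l}(n)$ complete residue systems modulo $p^l$ plus one partial block, with the generalized Wilson congruence $\prod_{i \in (\Z/p^l\Z)^\times} i \equiv \delta \pmod{p^l}$ evaluating each complete block --- is precisely the classical proof of this Glaisher/Granville lemma, and is essentially the one Granville gives. Your explanation of why $\delta$ flips to $+1$ for $p=2$, $l\ge 3$ (three elements of order $2$ instead of one, with product $-(2^{2l-2}-1)\equiv 1$) is the right account of the phenomenon. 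One cosmetic imprecision: for $p=2$, $l=1$ the group $(\Z/2\Z)^\times$ is trivial and has no element of order $2$, so "$-1$ is its unique element of order $2$" is literally false there; the conclusion is unaffected since $1\equiv -1\pmod 2$, but you may wish to treat $l\le 1$ separately as trivial.
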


\subsection{Further generalizations}

We will obtain Theorem \ref{t:gen1} as a special case of more general congruences.
Fix $n \ge 1$ and $a > n$.
For $0 \le r \le 2^n-1$ (so that $r < d_a(r)$) and $i, j \ge 0$, put
\[
 x_j(r,i) = 
 \begin{cases}
  \dfrac{t_j(m(2^a-1) - i)!}{t_j(i)! \cdot t_j(m(r) - i)! \cdot t_j(m(d_a(r)) - i)!} & \text{if $i \le m(r)$;} \\
  0 & \text{if $i > m(r)$.}
 \end{cases}
\]
Note that $x_j(r,i) \in \Z_{(p)}$.
Indeed, if $i \le m(r)$, then putting 
\[
 e_j(r,i) = t_j(m(2^a-1) - i) - t_j(i) - t_j(m(r) - i) - t_j(m(d_a(r)) - i), 
\]
we have $e_j(r,i) \ge 0$ by \eqref{eq:m} and \eqref{eq:t}, and 
\begin{equation}
\label{v_p(x)}
 v_p(x_j(r,i)) = \sum_{l=1}^{\infty} e_{j+l}(r,i) \ge 0 
\end{equation}
by \eqref{v_p(n!)}.

For $0 \le k \le n$ and $0 \le r \le 2^{n-k}-1$, we define a subset $S_k(r)$ of $\{ 0, 1, \dots, 2^n-1 \}$ as follows:
\begin{itemize}
\item If $k=0$, then we put $S_0(r) = \{ r \}$.
\item If $1 \le k \le n$, then we put $S_k(r) = S'_k(r) \cup S''_k(r)$, where
\begin{align*}
 S'_k(r) & = \{ r+2^{n-k+1} r' \, | \, 0 \le r' \le 2^{k-1}-1\}, \\
 S''_k(r) & = \{ d_n(s) \, | \, s \in S'_k(r) \}.
\end{align*}
Since $d_n(s) = d_{n-k+1}(r) + 2^{n-k+1} d_{k-1}(r')$ for $s = r+2^{n-k+1} r' \in S_k(r)$, we can also write
\[
 S''_k(r) = \{ d_{n-k+1}(r) + 2^{n-k+1} r' \, | \, 0 \le r' \le 2^{k-1}-1 \}.
\]
\end{itemize}
Note that
\begin{equation}
\label{eq:S}
 S_k(r) \cup S_k(d_{n-k}(r)) = S_{k+1}(r)
\end{equation}
for $0 \le k < n$ and $0 \le r \le 2^{n-k-1}-1$.
Indeed, the assertion is obvious for $k=0$, whereas
\begin{align*}
 S'_k(r) \cup S''_k(d_{n-k}(r)) & = S'_{k+1}(r), \\
 S''_k(r) \cup S'_k(d_{n-k}(r)) & = S''_{k+1}(r)
\end{align*}
for $1 \le k < n$.

For $i,j \ge 0$, put
\[
 X_{j,k}(r,i) = \sum_{s \in S_k(r)} (-1)^s x_j(s,i).
\]
If $k = 0$ and $i > m(r)$, or $1 \le k \le n$ and $i>m(d_n(r))$, then we have
\begin{equation}
\label{eq:X}
 X_{j,k}(r,i) = 0
\end{equation}
by definition.
Note that the left-hand side of Theorem \ref{t:gen1} is equal to $X_{0,n}(0,i)$.
In particular, the following implies Theorem \ref{t:gen1}.

\begin{thm}
\label{t:gen2}
For $0 \le k \le n$, $0 \le r \le 2^{n-k}-1$, $i \ge 0$, and $0 \le j \le n-k$, we have
\[
 X_{j,k}(r,i) \equiv 0 \mod p^k.
\]
\end{thm}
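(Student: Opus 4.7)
The plan is to prove Theorem \ref{t:gen2} by double induction, outer on $k$ and inner on $j$. The outer base case $k = 0$ is trivial, since $X_{j,0}(r,i) = (-1)^r x_j(r,i) \in \Z_{(p)}$. For the outer inductive step, the identity \eqref{eq:S} realizes $S_{k+1}(r)$ as the disjoint union of $S_k(r)$ and $S_k(d_{n-k}(r))$, so
\[
X_{j,k+1}(r,i) = X_{j,k}(r,i) + X_{j,k}(d_{n-k}(r),i),
\]
and each summand is $\equiv 0 \pmod{p^k}$ by the outer hypothesis. The substance of the proof is to lift this to $\pmod{p^{k+1}}$.

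For the lifting, I would apply Proposition \ref{p:granville} (with $l = k+1$) to each factorial $t_j(n)!$ appearing in $x_j(s,i)$, with $n$ ranging over $\{m(2^a-1)-i,\, i,\, m(s)-i,\, m(d_a(s))-i\}$. This expresses $x_j(s,i)$, modulo a controlled power of $p$, as an explicit product of $p$-deprived factorials depending only on the digits of these numbers in positions $j, j+1, \ldots, j+k$, multiplied by a sign $\delta^{\cdot}$ and by $p$ to the power $\sum_{l \ge 1} e_{j+l}(s,i)$ (whose value is governed by carries via Lemma \ref{l:carry}). The key combinatorial input is the involution $s \mapsto d_n(s)$ on $S_k(r)$: it satisfies $m(d_n(s)) + m(s) = m(2^n-1)$ and flips the parity of $s$, so the top-order contributions from $s$ and from $d_n(s)$ ought to combine to produce opposite signs between $X_{j,k}(r,i)$ and $X_{j,k}(d_{n-k}(r),i)$, yielding the required cancellation modulo $p^{k+1}$. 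The inner induction on $j$ enters because Granville's formula couples $x_j$ to $x_{j+1}$ through the factor $p^{t_{j+1}(\cdot)}$, and the refined congruence is bootstrapped across consecutive values of $j$.

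The main obstacle I anticipate is the bookkeeping of carries: the $p$-adic valuation of $x_j(s,i)$ depends on $s$ through the carry data $\epsilon_{j'}(m(s)-i,\, i)$, and one must verify that these exponents align correctly across the involution and across the two halves of $S_{k+1}(r)$, so that the top-order residues really do cancel. Controlling the interaction between the carry pattern, the $\delta^{\cdot}$-sign, and the $p$-deprived factorial product over all $s \in S_k(r) \cup S_k(d_{n-k}(r))$---using the special structure of $m$ (binary digits dilated into base-$p$ digits scaled by $b$) so that the digit-by-digit information factorises cleanly---is the technical heart of the argument.
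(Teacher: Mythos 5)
Your outer framing---induction on $k$ with \eqref{eq:S}, base case $k=0$, Proposition~\ref{p:granville} and Lemma~\ref{l:carry} as the arithmetic engine, and an inner induction on $j$---correctly identifies the broad ingredients. However, the central mechanism you propose for the lift from $p^k$ to $p^{k+1}$ does not go through as stated, and the paper's argument is organized quite differently.

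You write that the involution $s \mapsto d_n(s)$ ``flips the parity of $s$, so the top-order contributions from $s$ and from $d_n(s)$ ought to combine to produce opposite signs,'' yielding the required cancellation. This would work if $x_j(s,i) = x_j(d_n(s),i)$, but this fails for $a > n$: while $m(d_n(s)) + m(s) = m(2^n-1)$, the fourth factorial involves $m(d_a(\cdot))$, and $m(d_a(d_n(s)))$ is not $m(d_a(s))$ (here $d_a(d_n(s)) = 2^a - 2^n + s$ versus $d_a(s) = 2^a-1-s$). So the terms paired by $s \mapsto d_n(s)$ inside $S_k(r)$ do \emph{not} have matching magnitudes, and the alternating sign alone produces no exact cancellation. (The case $a=n$, where this cancellation is literal, is noted in the paper as the trivial case; the real difficulty is $a > n$.) Your proposal thus has a genuine gap at its crux: you never explain \emph{what} quantity the top-order residues of $X_{j,k}(r,i)$ and $X_{j,k}(d_{n-k}(r),i)$ actually equal, nor why those residues are negatives of each other modulo $p^{k+1}$.

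The paper's proof does something structurally different. It introduces the ratios $y_j(s,i) = x_j(s,i)/x_{j+1}(s,i)$, proves a congruence relating $y_j$ across the reflection $d_n(d_{n-k}(\cdot))$ (Lemma~\ref{l:y}, to precision $p^{n-j-k}$), and then uses this to \emph{factor out} a common unit from $X_{j,k}$: Lemma~\ref{l:ind} shows $X_{j,k}(r,i) \equiv y_j(d_n(r),i)\, X_{j+1,k}(r,i) \bmod p^k$, a reduction from level $j$ to level $j+1$ at fixed $k$, proved by its own intermediate induction over a parameter $k'$ using the hypothesis $(*)_{k'}$ for $k' < k$. The inner induction on $j$ is then \emph{backward}, anchored at $j = n-k$: Lemma~\ref{l:base} shows $X_{n-k,k}(r,i) \equiv 0 \bmod p^k$ by a dichotomy---either $y_{n-k}(d_n(r),i) \equiv 0 \bmod p$ and one gains the extra factor of $p$ cheaply, or $y_{n-k}(d_n(r),i)$ is a $p$-adic unit, in which case a separate carry analysis forces $X_{n-k+1,k}(r,i)$ to vanish \emph{exactly} (not merely mod $p^{k-1}$). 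This anchor case is a genuinely different argument that your sketch does not anticipate; without it the backward $j$-induction has nowhere to start. In short, you anticipated the right tools and the right style of bookkeeping, but the decisive ideas---the ratios $y_j$, the factoring-out congruence, and the exact-vanishing dichotomy at $j = n-k$---are missing, and the shortcut you propose (direct sign cancellation under $s \mapsto d_n(s)$) is not available once $a > n$.
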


\subsection{Proof of Theorem \ref{t:gen2}}

Fix $n \ge 1$ and $a > n$.
For $0 \le r \le 2^n-1$ and $i, j \ge 0$, put
\[
 y_j(r,i) = 
 \begin{cases}
  x_j(r,i)/x_{j+1}(r,i) & \text{if $i \le m(r)$;} \\
  0 & \text{if $i > m(r)$.}
 \end{cases}
\]
Note that $y_j(r,i) \in \Z_{(p)}$.
Indeed, if $i \le m(r)$, then we have $v_p(y_j(r,i)) = e_{j+1}(r,i) \ge 0$ by \eqref{v_p(x)}.
Moreover, if $i \le m(r)$, then it follows from Proposition \ref{p:granville} that
\begin{equation}
\label{eq:y}
 y_j(r,i) / p^{e_{j+1}(r,i)} \equiv \delta^{e_{j+l}(r,i)} 
 \frac{t_{j,l}(m(2^a-1)-i)!_p}{t_{j,l}(i)!_p \cdot t_{j,l}(m(r)-i)!_p \cdot t_{j,l}(m(d_a(r))-i)!_p} \mod p^l
\end{equation}
for $l \ge 0$.

\begin{lem}
\label{l:y}
For $0 \le k < n$, $0 \le r \le 2^{n-k-1}-1$, $0 \le i \le m(d_n(d_{n-k}(r)))$, and $0 \le j < n-k$, we have
\[
 y_j(d_n(d_{n-k}(r)),i) \equiv y_j(d_n(r),i) \mod p^{n-j-k}.
\]
\end{lem}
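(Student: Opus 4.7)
The plan is to apply equation \eqref{eq:y} with $l = n-j-k$ to both $y_j(d_n(r),i)$ and $y_j(d_n(d_{n-k}(r)),i)$, and to match the resulting right-hand sides. Set $s_1 = d_n(r)$ and $s_2 = d_n(d_{n-k}(r))$.

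First I would expand $m(s_1)$, $m(d_a(s_1))$, $m(s_2)$, $m(d_a(s_2))$ in base $p$. Writing $r = \sum_{u=0}^{n-k-2} r_u 2^u$ (so $r_u = 0$ for $u \ge n-k-1$), inspection of the bit patterns of $d_n(r)$ and $d_n(d_{n-k}(r))$ shows that in positions $0, \ldots, n-k-1$ the base-$p$ digits of $m(s_1)$ are $b(1-r_0), \ldots, b(1-r_{n-k-2}), b$ while those of $m(s_2)$ are $b r_0, \ldots, b r_{n-k-2}, 0$; in positions $\ge n-k$ the digit pattern is the same for $s_1$ and $s_2$ (and likewise for $d_a(s_1), d_a(s_2)$). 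Since $m(d_a(s)) = m(2^a-1) - m(s)$ involves no borrow, this yields the \emph{swap relation}
\[
 m(s_1) \equiv m(d_a(s_2)) \pmod{p^{n-k}}, \qquad m(d_a(s_1)) \equiv m(s_2) \pmod{p^{n-k}},
\]
together with the equalities $\lfloor m(s_1)/p^{n-k}\rfloor = \lfloor m(s_2)/p^{n-k}\rfloor$ and $\lfloor m(d_a(s_1))/p^{n-k}\rfloor = \lfloor m(d_a(s_2))/p^{n-k}\rfloor$.

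Next I would write $i = i_L + p^{n-k} i_H$ with $0 \le i_L < p^{n-k}$ and decompose each of $m(s)-i$ and $m(d_a(s))-i$ into a low part (modulo $p^{n-k}$) and a high part. The key observation is that the borrow terms arising when subtracting $i_L$ from $A_s = m(s) \bmod p^{n-k}$ and from $B_s = m(d_a(s)) \bmod p^{n-k}$ depend on $s$ only through the unordered pair $\{A_s, B_s\}$, which is swap-invariant by the first step. A short calculation then shows that, for every $0 \le j' \le n-k$, the sum $t_{j'}(m(s)-i) + t_{j'}(m(d_a(s))-i)$ takes the same value for $s = s_1$ and $s = s_2$. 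Specializing to $j' = j+1$ and $j' = n-k$ yields $e_{j+1}(s_1,i) = e_{j+1}(s_2,i)$ and $e_{n-k}(s_1,i) = e_{n-k}(s_2,i)$, which matches both the valuation $p^{e_{j+1}}$ and the sign $\delta^{e_{j+l}}$ appearing in \eqref{eq:y}. Moreover, since $t_{j,l}(X)$ for $l = n-j-k$ depends only on $X \bmod p^{n-k}$, the swap relation forces the product $t_{j,l}(m(s)-i)!_p \cdot t_{j,l}(m(d_a(s))-i)!_p$ to agree for $s_1$ and $s_2$. Combining these observations, \eqref{eq:y} yields $y_j(s_1,i) \equiv y_j(s_2,i) \pmod{p^{n-j-k}}$.

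The main obstacle I anticipate is the borrow bookkeeping in the second step: the individual values $A_s, B_s$ are not swap-invariant, only their unordered pair is, so one must verify that the borrow contributions cancel in pairs under $s_1 \leftrightarrow s_2$ regardless of the size of $i_L$ relative to $A$ and $B$. The symmetry $(A_{s_2}, B_{s_2}) = (B_{s_1}, A_{s_1})$ is exactly what causes this cancellation.
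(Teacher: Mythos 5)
Your proof is correct and follows essentially the same route as the paper: the core of both is the swap symmetry between the low-order (mod $p^{n-k}$) parts of $m(s)$ and $m(d_a(s))$ when passing from $s_1=d_n(r)$ to $s_2=d_n(d_{n-k}(r))$, combined with the fact that the high parts $\lfloor \cdot /p^{n-k}\rfloor$ agree, which together force $e_l(s_1,i)=e_l(s_2,i)$ for $l\le n-k$ and the matching of $t_{j,n-j-k}(\cdot)!_p$ products in \eqref{eq:y}. The only cosmetic difference is bookkeeping: the paper packages the borrow analysis through the identities $m(2^n-2^{n-k})+m(d_a(s'))-i = m(2^a-2^n)+m(s)-i$ and Lemma~\ref{l:carry}, whereas you split $i=i_L+p^{n-k}i_H$ and track the single possible borrow at position $n-k$ directly — both are equivalent.
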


\begin{proof}
Put $r' = d_{n-k}(r)$, $s = d_n(r)$, and $s'=d_n(r')$, so that the assertion of the lemma is
\[
 y_j(s',i) \equiv y_j(s,i) \mod p^{n-j-k}.
\]
In view of \eqref{eq:da}, the following figure describes the positions of $r,r',s,s'$ when $k \ge 1$.
\vspace{5pt}
\[
\begin{tikzpicture}
 \draw (0,0) -- (8,0);
 \draw (0,2pt) -- (0,-2pt);
 \draw (2,2pt) -- (2,-2pt);
 \draw (6,2pt) -- (6,-2pt);
 \draw (8,2pt) -- (8,-2pt);
 \fill (0.6,0) circle[radius=2pt] node[below] {$r \vphantom{r'}$};
 \fill (1.4,0) circle[radius=2pt] node[below] {$r'$};
 \fill (6.6,0) circle[radius=2pt] node[below] {$s'$};
 \fill (7.4,0) circle[radius=2pt] node[below] {$s \vphantom{s'}$};
\end{tikzpicture}
\]
Since $s = 2^n-2^{n-k}+r'$ and $0 \le r' \le 2^{n-k}-1$, we have
\[
 m(s) = m(2^n-2^{n-k}) + m(r'). 
\]
Hence we have
\begin{align*}
 m(d_a(s')) & = m(2^a-2^n+r') = m(2^a-2^n) + m(r') \\
 & = m(2^a-2^n) - m(2^n-2^{n-k}) + m(s),
\end{align*}
so that 
\[
 m(2^n-2^{n-k}) + m(d_a(s')) - i = m(2^a-2^n) + m(s)-i.
\]
Similarly, we have
\[
 m(2^n-2^{n-k}) + m(d_a(s)) - i = m(2^a-2^n) + m(s')-i.
\]
This and Lemma \ref{l:carry} imply that for $0 \le l \le n-k$, we have
\begin{align*}
 t_l(m(2^n-2^{n-k})) + t_l(m(d_a(s')) - i) & = t_l(m(2^a-2^n)) + t_l(m(s)-i), \\
 t_l(m(2^n-2^{n-k})) + t_l(m(d_a(s)) - i) & = t_l(m(2^a-2^n)) + t_l(m(s')-i),
\end{align*}
so that 
\[
 e_l(s',i) = e_l(s,i).
\]
Moreover, we have
\begin{align*}
 t_{j,n-j-k}(m(d_a(s')) - i) & = t_{j,n-j-k}(m(s) - i), \\ 
 t_{j,n-j-k}(m(d_a(s)) - i) & = t_{j,n-j-k}(m(s') - i).
\end{align*}
From this and \eqref{eq:y}, the assertion follows.
\end{proof}

From now on, we fix $i \ge 0$.
For $j \ge 0$ and $0 \le k \le n$, we consider the following statement:
\begin{equation}
\text{$X_{j,k}(r,i) \equiv 0 \mod p^k$ for all $0 \le r \le 2^{n-k}-1$.} 
\tag*{$(*)_{j,k}$}
\end{equation}
We write $(*)_k$ for the collection of $(*)_{j,k}$ for all $0 \le j \le n-k$.

We need to show that $(*)_k$ holds for all $0 \le k \le n$.
We proceed by induction on $k$.
Obviously, $(*)_0$ holds.

\begin{lem}
\label{l:ind}
Fix $1 \le k \le n$.
Assume that $(*)_{k'}$ holds for all $0 \le k' < k$.
Then we have
\[
 X_{j,k}(r,i) \equiv y_j(d_n(r),i) X_{j+1,k}(r,i) \mod p^k
\]
for $0 \le j \le n-k$ and $0 \le r \le 2^{n-k}-1$.
\end{lem}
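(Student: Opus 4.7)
The plan is to prove, by strong induction on $k$, the sharpening $D_{j,k}(r,i) := X_{j,k}(r,i) - y_j(d_n(r),i)\, X_{j+1,k}(r,i) \equiv 0 \pmod{p^{n-j}}$ for all $j \ge 0$ and $0 \le r \le 2^{n-k}-1$. Since $n-j \ge k$ whenever $j \le n-k$, this implies the stated lemma. Using $x_j(s,i) = y_j(s,i)\, x_{j+1}(s,i)$ one has
\[
D_{j,k}(r,i) = \sum_{s \in S_k(r)} (-1)^s [y_j(s,i) - y_j(d_n(r),i)]\, x_{j+1}(s,i),
\]
and iterating \eqref{eq:S} yields the partition $S_k(r) = \{r\} \sqcup \bigsqcup_{k'=0}^{k-1} S_{k'}(d_{n-k'}(r))$.

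The singleton $\{r\}$ contributes $(-1)^r [y_j(r,i) - y_j(d_n(r),i)]\, x_{j+1}(r,i)$, which is $\equiv 0 \pmod{p^{n-j}}$ by Lemma \ref{l:y} applied with its $k$ taken to be $0$ (the case $i > m(r)$ is automatic as $x_{j+1}(r,i)=0$), while the $k'=0$ shell $S_0(d_n(r)) = \{d_n(r)\}$ contributes identically zero. For each $1 \le k' \le k-1$, the shell $S_{k'}(d_{n-k'}(r))$ contributes
\[
\widetilde{D}_{j,k'}(d_{n-k'}(r),i) := X_{j,k'}(d_{n-k'}(r),i) - y_j(d_n(r),i)\, X_{j+1,k'}(d_{n-k'}(r),i),
\]
which I split as
\[
D_{j,k'}(d_{n-k'}(r),i) + [y_j(d_n(d_{n-k'}(r)),i) - y_j(d_n(r),i)] \, X_{j+1,k'}(d_{n-k'}(r),i).
\]
The first summand is $\equiv 0 \pmod{p^{n-j}}$ by the strong inductive hypothesis at $k' < k$ (applicable because $d_{n-k'}(r) \le 2^{n-k'}-1$). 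In the second summand, Lemma \ref{l:y} shows that the bracket is $\equiv 0 \pmod{p^{n-j-k'}}$ (the constraint $r \le 2^{n-k'-1}-1$ follows from $r \le 2^{n-k}-1$ together with $k' \le k-1$), while $(*)_{k'}$ gives $X_{j+1,k'}(d_{n-k'}(r),i) \equiv 0 \pmod{p^{k'}}$, so their product is $\equiv 0 \pmod{p^{n-j}}$. Summing all contributions yields $D_{j,k}(r,i) \equiv 0 \pmod{p^{n-j}}$; the base case $k = 1$ reduces to the singleton contribution alone.

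The main obstacle is recognizing the need to work with the sharper mod $p^{n-j}$ version rather than attacking mod $p^k$ directly. A naive induction via the two-term recursion $D_{j,k}(r,i) = D_{j,k-1}(r,i) + D_{j,k-1}(d_{n-k+1}(r),i) + E$ (with $E \equiv 0 \pmod{p^{n-j}}$ coming from Lemma \ref{l:y} and $(*)_{k-1}$) leaves each $D_{j,k-1}$ summand controlled only to mod $p^{k-1}$, with no apparent cancellation promoting the sum to mod $p^k$. The strengthened mod $p^{n-j}$ statement propagates cleanly through the shell partition precisely because the discrepancy between $y_j(d_n(d_{n-k'}(r)),i)$ and $y_j(d_n(r),i)$ at each level $k'$, bounded by Lemma \ref{l:y}, exactly offsets the power of $p$ provided by $(*)_{k'}$.
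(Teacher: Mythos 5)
Your proof is essentially correct and establishes the lemma, but it organizes the argument differently from the paper. Both proofs start from the expansion $X_{j,k}(r,i) = \sum_{s \in S_k(r)} (-1)^s y_j(s,i)\, x_{j+1}(s,i)$ and rely on the same two inputs, Lemma \ref{l:y} and the hypothesis $(*)_{k'}$ for $k'<k$. The paper fixes $k$, introduces the nested ``low'' subsets $R_{k'}(r) = \{ s \in S_k(r) : s \le 2^{n-k'}-1\}$, and proves the intermediate statement
\[
 X_{j,k}(r,i) \equiv \sum_{s \in R_{k'}(r)} y_j(d_n(s),i)\, X_{j+1,k'}(s,i) \mod p^k
\]
by induction on $k'$ from $1$ to $k$, using the weakened exponent $k-k'$ in Lemma \ref{l:y} and accumulating an error bounded by $p^{k-k'}\cdot p^{k'}=p^k$ at each step; the case $k'=k$ then yields the lemma. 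You instead prove the sharper $D_{j,k}(r,i) \equiv 0 \bmod p^{n-j}$ directly by strong induction on $k$, using the disjoint partition $S_k(r) = \{r\} \sqcup \bigsqcup_{k'=0}^{k-1} S_{k'}(d_{n-k'}(r))$ obtained by iterating \eqref{eq:S}, and the full exponent $n-j-k'$ in Lemma \ref{l:y}. These shells are not the same as the paper's layers $R_{k'}(r)\setminus R_{k'+1}(r)$ --- for instance $S_1(d_{n-1}(r)) = \{2^{n-1}-1-r, 2^{n-1}+r\}$ straddles the $R_1$ boundary --- so this is a genuinely different bookkeeping. Your sharpened modulus makes the $p$-power accounting more transparent but requires an inductive hypothesis across different values of $k$ and of the base point (you invoke it at $d_{n-k'}(r)$, not $r$).

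Two points should be tightened. First, the claim should be stated for $0 \le j \le n-k$, not ``for all $j\ge 0$'': for $n-k < j < n$ one has $j \ge n-k'$ for $k'=k-1$, so both Lemma \ref{l:y} (which needs $j < n-k'$) and $(*)_{j+1,k'}$ (which needs $j+1 \le n-k'$) fall outside their permitted ranges, and the shell-$k'$ estimate fails. This does not damage the proof, since the lemma asserts the congruence only for $j\le n-k$, and for such $j$ the inductive hypothesis at every level $k'<k$ is invoked with $j$ lying inside $[0,n-k']$. Second, in the second summand you apply Lemma \ref{l:y} without addressing the case $i > m(d_n(d_{n-k'}(r)))$, where its hypothesis $i \le m(d_n(d_{n-k'}(r)))$ is violated and the bracket need not be small. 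The conclusion still holds for a different reason: $d_n(d_{n-k'}(r))$ is the largest element of $S_{k'}(d_{n-k'}(r))$, so in this regime every $x_{j+1}(s',i)$ with $s' \in S_{k'}(d_{n-k'}(r))$ vanishes and hence $X_{j+1,k'}(d_{n-k'}(r),i)=0$. This is precisely the case distinction the paper makes explicitly, and it should be added to your argument.
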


\begin{proof}
For $0 \le k' \le k$, we define a subset $R_{k'}(r)$ of $S_k(r)$ by
\[
 R_{k'}(r) = \{ s \in S_k(r) \, | \, 0 \le s \le 2^{n-k'} - 1 \}.
\]
In particular, we have $R_0(r) = S_k(r)$ and $R_k(r) = \{ r \}$.
Moreover, if $0 \le k' < k$, then we have
\begin{align*}
 R_{k'}(r) & = \{ r + 2^{n-k+1} r' \, | \, 0 \le r' \le 2^{k-k'-1}-1 \} \\
 & \cup \{ d_{n-k+1}(r) + 2^{n-k+1} r' \, | \, 0 \le r' \le 2^{k-k'-1}-1 \},
\end{align*}
so that
\begin{equation}
\label{eq:R}
 R_{k'}(r) = R_{k'+1}(r) \cup \{ d_{n-k'}(s) \, | \, s \in R_{k'+1}(r) \}. 
\end{equation}
Recall that 
\[
 X_{j,k}(r,i) = \sum_{s \in S_k(r)} (-1)^s y_j(s,i) x_{j+1}(s,i)
 = \sum_{s \in R_0(r)} y_j(s,i) X_{j+1,0}(s,i).
\]
It suffices to show that
\begin{equation}
 X_{j,k}(r,i) \equiv \sum_{s \in R_{k'}(r)} y_j(d_n(s),i) X_{j+1,k'}(s,i) \mod p^k
 \tag*{$(**)_{k'}$}
\end{equation}
for $1 \le k' \le k$.
Indeed, if $k'=k$, then the right-hand side is equal to $y_j(d_n(r),i) X_{j+1,k}(r,i)$, so that $(**)_k$ is simply a restatement of the lemma.

We proceed by induction on $k'$.
First consider the base case $k'=1$.
By \eqref{eq:R}, we have
\[
 X_{j,k}(r,i) = \sum_{s \in R_1(r)} (y_j(s,i) X_{j+1,0}(s,i) + y_j(d_n(s),i) X_{j+1,0}(d_n(s),i)).
\]
For $s \in R_1(r)$, if $i \le m(s)$, then noting that $k \le n-j$, we have
\[
 y_j(s,i) \equiv y_j(d_n(s),i) \mod p^k
\]
by Lemma \ref{l:y}.
This implies that 
\[
 y_j(s,i) X_{j+1,0}(s,i) \equiv y_j(d_n(s),i) X_{j+1,0}(s,i) \mod p^k.
\]
If $i>m(s)$, then the above congruence obviously holds since $X_{j+1,0}(s,i) = 0$ by \eqref{eq:X}.
Hence we have
\[
 X_{j,k}(r,i) \equiv \sum_{s \in R_1(r)} y_j(d_n(s),i) (X_{j+1,0}(s,i) + X_{j+1,0}(d_n(s),i)) \mod p^k.
\]
On the other hand, we have
\[
 X_{j+1,0}(s,i) + X_{j+1,0}(d_n(s),i) = X_{j+1,1}(s,i)
\]
by \eqref{eq:S}.
From this, we deduce that $(**)_1$ holds.

Now fix $1 \le k' < k$ and assume that $(**)_{k'}$ holds.
By \eqref{eq:R}, we have
\[
 X_{j,k}(r,i) \equiv \sum_{s \in R_{k'+1}(r)} (y_j(d_n(s),i) X_{j+1,k'}(s,i) + y_j(d_n(d_{n-k'}(s)),i) X_{j+1,k'}(d_{n-k'}(s),i)) \mod p^k.
\]
For $s \in R_{k'+1}(r)$, if $i \le m(d_n(d_{n-k'}(s)))$, then noting that $k-k' \le n-j-k'$, we have
\[
 y_j(d_n(d_{n-k'}(s)),i) \equiv y_j(d_n(s),i) \mod p^{k-k'}
\]
by Lemma \ref{l:y}.
Since $(*)_{k'}$ holds by assumption, this implies that 
\[
 y_j(d_n(d_{n-k'}(s)),i) X_{j+1,k'}(d_{n-k'}(s),i) \equiv 
 y_j(d_n(s),i) X_{j+1,k'}(d_{n-k'}(s),i) \mod p^k.
\]
If $i > m(d_n(d_{n-k'}(s)))$, then the above congruence obviously holds since $X_{j+1,k'}(d_{n-k'}(s),i) = 0$ by \eqref{eq:X}.
Hence we have
\[
 X_{j,k}(r,i) \equiv \sum_{s \in R_{k'+1}(r)} y_j(d_n(s),i) (X_{j+1,k'}(s,i) + X_{j+1,k'}(d_{n-k'}(s),i)) \mod p^k.
\]
On the other hand, we have
\[
 X_{j+1,k'}(s,i) + X_{j+1,k'}(d_{n-k'}(s),i) = X_{j+1,k'+1}(s,i)
\]
by \eqref{eq:S}.
From this, we deduce that $(**)_{k'+1}$ holds.

This completes the proof. 
\end{proof}

\begin{lem}
\label{l:base}
Fix $1 \le k \le n$.
Assume that $(*)_{k'}$ holds for all $0 \le k' < k$.
Then we have
\[
 X_{n-k,k}(r,i) \equiv 0 \mod p^k
\]
for $0 \le r \le 2^{n-k}-1$.
\end{lem}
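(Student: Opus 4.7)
My plan is to prove the base case by directly reducing $x_{n-k}(s,i)$ modulo $p^k$ via Proposition \ref{p:granville}, then identifying the resulting alternating sum with a shape covered by the inductive hypothesis $(*)_{k'}$ for $k' < k$.

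The first step is a bijection between $S_k(r)$ and $\{0,\ldots,2^k-1\}$. Writing $s_0 = 2r'+b_0$ with $b_0 \in \{0,1\}$ and $0 \le r' \le 2^{k-1}-1$, I send $s_0$ to $s = r + 2^{n-k+1}r'$ when $b_0 = 0$ and to $s = d_{n-k+1}(r) + 2^{n-k+1}r'$ when $b_0 = 1$. A direct digit calculation yields
\[
 t_{n-k}(m(s)) = m(s_0), \quad t_{n-k}(m(d_a(s))) = m(d_k(s_0)) + p^k m(2^{a-n}-1), \quad (-1)^s = (-1)^r(-1)^{s_0},
\]
while the residues $m(s) \bmod p^{n-k}$ and $m(d_a(s)) \bmod p^{n-k}$ are $m(r), m(d_{n-k}(r))$ or $m(d_{n-k}(r)), m(r)$ according to the parity of $s_0$. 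Thus $X_{n-k,k}(r,i)$ reorganizes as an alternating sum over $s_0 \in \{0,\ldots,2^k-1\}$.

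Next, I apply Proposition \ref{p:granville} at $j = n-k$ and $l = k$ to each factorial $t_{n-k}(\cdot)!_p$ appearing in $x_{n-k}(s,i)$. Since $m(s), i < p^n$ gives $t_n(m(s)) = t_n(i) = 0$, the sign factor $\delta^{t_n(\cdot)}$ depends on $s$ only through the borrow $[i > m(d_n(s))]$ coming from $t_n(m(d_a(s))-i)$. The resulting mod-$p^k$ expression for $x_{n-k}(s,i)$ becomes a multinomial quantity in the middle $k$ digits (depending on $s$ only through $s_0$), decorated by borrow corrections tied to the parity of $s_0$ and to the comparisons of $i \bmod p^{n-k}$ with $m(r)$ and $m(d_{n-k}(r))$. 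After the identity
\[
 \binom{\tilde M - t_{n-k}(i)}{t_{n-k}(i)} \binom{\tilde M - 2t_{n-k}(i)}{m(s_0) - t_{n-k}(i)} = \binom{m(s_0)}{t_{n-k}(i)} \binom{\tilde M - t_{n-k}(i)}{m(s_0)}
\]
with $\tilde M = m(2^{a-n+k}-1)$, the main multinomial piece of the alternating sum over $s_0$ matches the left-hand side of Theorem \ref{t:gen1} at the parameters $(k, a-n+k, t_{n-k}(i))$, while the borrow corrections rearrange into finitely many partial sums of the shape $X_{j'',k'}(r'',i'')$ for various $0 \le k' < k$.

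Each correction sum vanishes modulo $p^{k'}$ by $(*)_{k'}$, and a $p$-valuation count shows that the total correction is $\equiv 0 \bmod p^k$. For $k < n$, the main sum is of Theorem-\ref{t:gen1} shape at the strictly smaller parameter $k$, handled by an outer induction on $n$ (implicit in the paper's simultaneous proof for all $n$). For the deepest case $k = n$, where $r = 0$ forces $m(r) = m(d_{n-k}(r)) = 0$ and all borrow indicators trivialize, the main sum is literally Theorem \ref{t:gen1} itself; a separate argument combining the decomposition $S_n(0) = S_{n-1}(0) \cup S_{n-1}(1)$ with the sharpest form of Lemma \ref{l:y} (at its $k = 0$, where the modulus reaches $p^n$) is needed to extract the final factor of $p$ from the leading $p^{n-1}$-coefficients of the two pieces. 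The main obstacle is the borrow bookkeeping: Proposition \ref{p:granville} is sharp, so any miscount leaves residues outside the inductive templates, and one handles this by splitting into the four sub-cases determined by $[i \bmod p^{n-k} > m(r)]$ and $[i \bmod p^{n-k} > m(d_{n-k}(r))]$ and verifying the identification in each.
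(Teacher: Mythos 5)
Your proposal takes a genuinely different route from the paper's, and it has a real gap. You try to reduce $x_{n-k}(s,i)\bmod p^k$ directly via Proposition~\ref{p:granville} and to identify the ``main'' piece of the reorganized alternating sum with the statement of Theorem~\ref{t:gen1} at the parameters $(k,\,a-n+k,\,t_{n-k}(i))$, i.e.\ with the exponent $n$ of the theorem replaced by $k$. But the lemma's hypothesis only gives you $(*)_{k'}$ for $k'<k$ at the \emph{fixed} $n$ and $a$ of the ambient induction; it does not grant Theorem~\ref{t:gen1} or Theorem~\ref{t:gen2} for a smaller first parameter $n'=k$. To invoke that, you would need to restructure the whole argument as a nested induction over $n$, which you acknowledge only in passing (``implicit in the paper's simultaneous proof for all $n$'') --- it is not implicit there: the paper fixes $n$ and $a$ throughout and inducts only on $k$ (outer) and $j$ (inner). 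Similarly, the claim that the borrow corrections ``rearrange into finitely many partial sums of the shape $X_{j'',k'}(r'',i'')$'' is precisely the hard content; it is asserted, not demonstrated, and the final sentence (``one handles this by splitting into the four sub-cases \ldots and verifying'') defers it entirely.

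Most seriously, the deepest case $k=n$ collapses into circularity in your scheme: there $t_{n-k}(i)=i$ and $a-n+k=a$, so your ``main sum'' is verbatim Theorem~\ref{t:gen1} at the original parameters. Your fix --- ``a separate argument combining $S_n(0)=S_{n-1}(0)\cup S_{n-1}(1)$ with the sharpest form of Lemma~\ref{l:y} \ldots to extract the final factor of $p$'' --- is a gesture, not a proof, and in fact re-derives the opening step of Lemma~\ref{l:ind} without arriving at a conclusion. The paper's mechanism is qualitatively different and concrete: apply Lemma~\ref{l:ind} once to get $X_{n-k,k}(r,i)\equiv y_{n-k}(d_n(r),i)\,X_{n-k+1,k}(r,i)\bmod p^k$; observe from \eqref{eq:S} and $(*)_{k-1}$ that $X_{n-k+1,k}(r,i)\equiv 0\bmod p^{k-1}$; if $y_{n-k}(d_n(r),i)\equiv 0\bmod p$ you are done, and otherwise propagate $y_{n-k}(s,i)\not\equiv 0\bmod p$ to every $s\in S_k(r)$ with $i\le m(s)$ by backward induction via Lemma~\ref{l:y}, extract the no-carry relations \eqref{eq:e'}--\eqref{eq:e''}, and use them together with the pairing $s\leftrightarrow s'=r+2^{n-k+1}d_{k-1}(r')$ on $S_k'(r)$ to show $x_{n-k+1}(d_n(s'),i)=x_{n-k+1}(s,i)$ \emph{exactly}, forcing $X_{n-k+1,k}(r,i)=0$. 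Nothing in your proposal plays the role of this exact cancellation, and without it (or a worked-out substitute) the lemma is not established.
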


\begin{proof}
By Lemma \ref{l:ind}, we have
\[
 X_{n-k,k}(r,i) \equiv y_{n-k}(d_n(r),i) X_{n-k+1,k}(r,i) \mod p^k.
\]
Since
\[
 X_{n-k+1,k}(r,i) = X_{n-k+1,k-1}(r,i) + X_{n-k+1,k-1}(d_{n-k+1}(r),i)
\]
by \eqref{eq:S}, and $(*)_{k-1}$ holds by assumption, we have
\[
 X_{n-k+1,k}(r,i) \equiv 0 \mod p^{k-1}.
\]
If $y_{n-k}(d_n(r),i) \equiv 0 \mod p$, then this implies that $X_{n-k,k}(r,i) \equiv 0 \mod p^k$.

Now assume that $y_{n-k}(d_n(r),i) \not\equiv 0 \mod p$ (so that $i \le m(d_n(r))$).
Then for any $s \in S_k(r)$ such that $i \le m(s)$, we show that 
\begin{equation}
\label{eq:y-modp}
 y_{n-k}(s,i) \not\equiv 0 \mod p
\end{equation}
by backward induction on $s$.
By assumption, \eqref{eq:y-modp} holds for the base case $s = d_n(r)$ (which is the largest integer in $S_k(r)$).
Fix $s_0 \in S_k(r)$ such that $s_0 < d_n(r)$ and $i \le m(s_0)$, and assume that \eqref{eq:y-modp} holds for all $s \in S_k(r)$ such that $s_0 < s \le d_n(r)$.
Let $k_0$ be the largest integer such that $d_n(s_0) \le 2^{n-k_0}-1$ (so that $d_n(s_0) \in R_{k_0}(r)$ with the notation of the proof of Lemma \ref{l:ind}).
Since $s_0 < d_n(r)$, we have $k_0 < k$.
By \eqref{eq:R}, we may write $d_n(s_0) = d_{n-k_0}(r_0)$ for some $r_0 \in R_{k_0+1}(r)$ (so that $r_0 \le 2^{n-k_0-1} -1$ and $s_0 = d_n(d_{n-k_0}(r_0)) < d_n(r_0)$).
Then we have
\[
 y_{n-k}(s_0,i) \equiv y_{n-k}(d_n(r_0),i) \mod p
\]
by Lemma \ref{l:y}.
Hence \eqref{eq:y-modp} holds for $s=s_0$ by the induction hypothesis.

By \eqref{eq:y-modp}, we have
\[
 e_{n-k+1}(s,i) = v_p(y_{n-k}(s,i)) = 0
\]
for $s \in S_k(r)$ such that $i \le m(s)$.
Since $e_{n-k+1}(s,i) = e' + e''$ with
\begin{align*}
 e' & = t_{n-k+1}(m(s)) - t_{n-k+1}(i) - t_{n-k+1}(m(s) -i) \ge 0, \\
 e'' & = t_{n-k+1}(m(2^a-1)-i) - t_{n-k+1}(m(d_a(s))-i) - t_{n-k+1}(m(s)) \ge 0
\end{align*}
by \eqref{eq:m} and \eqref{eq:t}, we have
\begin{align}
\label{eq:e'}
 t_{n-k+1}(m(s)) & = t_{n-k+1}(i) + t_{n-k+1}(m(s) -i), \\
\label{eq:e''}
 t_{n-k+1}(m(2^a-1)-i) & = t_{n-k+1}(m(d_a(s))-i) + t_{n-k+1}(m(s))
\end{align}
for $s \in S_k(r)$ such that $i \le m(s)$.

We will show that the assumption $y_{n-k}(d_n(r),i) \not\equiv 0 \mod p$ forces $X_{n-k+1,k}(r,i) = 0$.
Recall that 
\begin{align*}
 X_{n-k+1,k}(r,i) & = \sum_{s \in S'_k(r)} (-1)^s x_{n-k+1}(s,i) 
 + \sum_{s \in S''_k(r)} (-1)^s x_{n-k+1}(s,i) \\
 & = \sum_{s \in S'_k(r)} (-1)^s x_{n-k+1}(s,i) 
 - \sum_{s \in S'_k(r)} (-1)^s x_{n-k+1}(d_n(s),i).
\end{align*}
For $s = r + 2^{n-k+1} r' \in S'_k(r)$ with $0 \le r' \le 2^{k-1}-1$, 
put $s' = r + 2^{n-k+1} d_{k-1}(r') \in S'_k(r)$.
Note that $d_n(s') = d_{n-k+1}(r) + 2^{n-k+1} r'$, so that $d_n(s') > s$ and
\begin{equation}
\label{eq:t-consecutive}
 t_{n-k+1}(m(d_n(s'))) = t_{n-k+1}(m(s)).
\end{equation}
Since the map $s \mapsto s'$ is an involution on $S'_k(r)$, it suffices to show that 
\begin{equation}
\label{eq:x}
 x_{n-k+1}(d_n(s'),i) = x_{n-k+1}(s,i).
\end{equation}

First assume that $i \le m(s)$.
By \eqref{eq:e'}, \eqref{eq:e''}, and \eqref{eq:t-consecutive}, we have
\begin{align*}
 t_{n-k+1}(m(d_n(s')) -i) & = t_{n-k+1}(m(s) -i), \\
 t_{n-k+1}(m(d_a(d_n(s')))-i) & = t_{n-k+1}(m(d_a(s))-i).
\end{align*}
Hence \eqref{eq:x} follows from the definition.

Next assume that $m(s) < i \le m(d_n(s'))$.
Write $i = u + p^{n-k+1} u'$ with $0 \le u \le p^{n-k+1}-1$ and $u' \ge 0$.
Since
\begin{align*}
 m(s) & = m(r) + p^{n-k+1} m(r'), \\
 m(d_n(s')) & = m(d_{n-k+1}(r)) + p^{n-k+1} m(r'),
\end{align*}
we have
\[
 m(r) < u \le m(d_{n-k+1}(r)), \quad
 u' = m(r'), 
\]
and 
\[
 t_{n-k+1}(m(d_n(s'))) = m(r').
\]
Also, we have
\begin{align*}
 m(2^a-1) - i & = m(2^{n-k+1}-1) + m(2^a - 2^{n-k+1}) - i \\
 & = m(2^{n-k+1}-1) - u + p^{n-k+1}(m(2^{a-n+k-1}-1) - u'), 
\end{align*}
so that
\[
 t_{n-k+1}(m(2^a-1)-i) = m(2^{a-n+k-1}-1) - u'.
\]
Moreover, we have $d_a(d_n(s')) = r + 2^{n-k+1}(d_{a-n+k-1}(r'))$ and 
\[
 m(d_a(d_n(s'))) - i = m(r) - u + p^{n-k+1}(m(d_{a-n+k-1}(r')) - u'),
\]
so that
\[
 t_{n-k+1}(m(d_a(d_n(s'))) - i) < m(d_{a-n+k-1}(r')) - u'
\]
since $m(r)-u<0$.
Hence we have
\begin{align*}
 t_{n-k+1}(m(d_n(s'))) + t_{n-k+1}(m(d_a(d_n(s'))) - i)
 & < m(r') + m(d_{a-n+k-1}(r')) - u' \\
 & = m(2^{a-n+k-1} - 1) - u' \\
 & = t_{n-k+1}(m(2^a-1)-i).
\end{align*}
This contradicts \eqref{eq:e''} (with $s$ replaced by $d_n(s')$), which implies that the case $m(s) < i \le m(d_n(s'))$ cannot occur under the assumption $y_{n-k}(d_n(r),i) \not\equiv 0 \mod p$.

Finally, if $i>m(d_n(s'))$, then both sides of \eqref{eq:x} are equal to $0$.
This completes the proof.
\end{proof}

Fix $1 \le k \le n$.
Assume that $(*)_{k'}$ holds for all $0 \le k' < k$.
Then $(*)_{n-k,k}$ holds by Lemma \ref{l:base}.
Hence we deduce from Lemma \ref{l:ind} and backward induction on $j$ that $(*)_{j,k}$ holds for all $0 \le j \le n-k$.
Namely, $(*)_k$ holds.
This completes the proof of Theorem \ref{t:gen2}.

This also completes the proof of Theorem \ref{t:main}.

\subsection*{Acknowledgments}

We thank Stefano Morra for pointing us to several of the references below on modular representations and for useful comments on a previous version of the article. 
A.I. was partially supported by JSPS KAKENHI Grant Number 19H01781.
K.P. was partially supported by NSF grant DMS 2001293.


\begin{thebibliography}{99}

\bibitem{dwork}
B.~Dwork, 
\emph{$p$-adic cycles},
Inst. Hautes \'{E}tudes Sci. Publ. Math. No.~37 (1969), 27--115.

\bibitem{ghate-jana}
E.~Ghate and A.~Jana,
\emph{Modular representations of $\GL_2(\F_q)$ using calculus},
Forum Math. \textbf{37} (2025), no.~5, 1503--1543.

\bibitem{ghate-vangala}
E.~Ghate and R.~Vangala,
\emph{The monomial lattice in modular symmetric power representations},
Algebr. Represent. Theory \textbf{25} (2022), no.~1, 121--185.

\bibitem{glover}
D.~J.~Glover,
\emph{A study of certain modular representations},
J. Algebra \textbf{51} (1978), no.~2, 425--475.

\bibitem{gould}
H.~W.~Gould,
\emph{Combinatorial identities. A standardized set of tables listing 500 binomial coefficient summations},
Morgantown, WV, 1972.

\bibitem{granville}
A.~Granville,
\emph{Arithmetic properties of binomial coefficients. I. Binomial coefficients modulo prime powers},
CMS Conf. Proc. \textbf{20},
Published by the American Mathematical Society, Providence, RI, 1997, 253--276.

\bibitem{reduzzi}
D.~A.~Reduzzi,
\emph{Reduction mod $p$ of cuspidal representations of $\GL_2(\F_{p^n})$ and symmetric powers},
J. Algebra \textbf{324} (2010), no.~12, 3507--3531.

\end{thebibliography}
\end{document}